\documentclass[review,onefignum,onetabnum]{siamart171218}

\usepackage{amsmath,amsfonts,amssymb,amscd,subeqnarray}
\usepackage{graphicx,subfigure}
\usepackage{bm,color}
\usepackage{comment}
\usepackage{stmaryrd}
\usepackage{listings}
\usepackage{boldfonts}
\usepackage{symbols}
\usepackage{siunitx}
\usepackage{booktabs}



\usepackage{lipsum}
\usepackage{amsfonts}
\usepackage{graphicx}
\usepackage{epstopdf}
\usepackage{algorithmic}
\ifpdf
  \DeclareGraphicsExtensions{.eps,.pdf,.png,.jpg}
\else
  \DeclareGraphicsExtensions{.eps}
\fi


\newsiamremark{remark}{Remark}
\newsiamremark{hypothesis}{Hypothesis}
\crefname{hypothesis}{Hypothesis}{Hypotheses}
\newsiamthm{claim}{Claim}

\headers{Positivity and maximum principle preserving DG}{Shihua Gong, Youngju Lee, Yukun Li, Yue Yu}

\title{Positivity and maximum principle preserving discontinuous Galerkin finite element schemes for a coupled flow and transport
}

\author{Shihua Gong\footnotemark[4]
\thanks{School of Science and Engineering, The Chinese University of Hong Kong, Shenzhen, Guangdong 518172, China(\email{gongshihua@cuhk.edu.cn}).}
\and 
Young-Ju Lee\thanks{Department of Mathematics, Texas State University, TX 78666 (\email{yjlee@txstate.edu}).}
\and 
Yukun Li \thanks{Department of Mathematics, University of Central Florida(\email{yukun.li@ucf.edu}).}
\and 
Yue Yu\footnotemark[1]
\thanks{Shenzhen International Center For Industrial And Applied Mathematics, Shenzhen Research Institute of Big Data(\email{223010135@link.cuhk.edu.cn}).}}

\usepackage{amsopn}

\makeatletter
\newcommand*{\addFileDependency}[1]{
  \typeout{(#1)}
  \@addtofilelist{#1}
  \IfFileExists{#1}{}{\typeout{No file #1.}}
}
\makeatother

\newcommand*{\myexternaldocument}[1]{%
    \externaldocument{#1}%
    \addFileDependency{#1.tex}%
    \addFileDependency{#1.aux}%
}

\ifpdf
\hypersetup{
  pdftitle={Positivity and maximum principle preserving discontinuous Galerkin finite element schemes for a coupled flow and transport},
  pdfauthor={D. Doe, P. T. Frank, and J. E. Smith}
}
\fi


\myexternaldocument{ex_supplement}


\begin{document}

\maketitle

\begin{abstract}
We introduce a new concept of the locally conservative flux and investigate its relationship with the compatible discretization pioneered by Dawson, Sun and Wheeler \cite{DawsonC_SunS_WheelerM-2004aa}. 
We then demonstrate how the new concept of the locally conservative flux can play a crucial role in obtaining the $L^2$ norm stability of the discontinuous Galerkin finite element scheme for the transport in the coupled system with flow. In particular, the lowest order discontinuous Galerkin finite element for the transport is shown to inherit the positivity and maximum principle when the locally conservative flux is used, which has been elusive for many years in literature. The theoretical results established in this paper are based on the equivalence between Lesaint-Raviart discontinuous Galerkin scheme and Brezzi-Marini-S{\"u}li discontinuous Galerkin scheme for the linear hyperbolic system as well as  the relationship between the Lesaint-Raviart discontinuous Galerkin scheme and the characteristic method along the streamline. Sample numerical experiments have also been performed to justify our theoretical findings. 
\end{abstract}

\begin{keywords}
Maximum principle; Positivity Preserving Scheme; Lesaint-Raviart Discontinuos Galerkin Finite Element Methods; Brezzi-Marini-S{\"u}li Jump Stabilized Discontinuous Galerkin Finite Element Methods; Local Conservation 
\end{keywords}

\begin{AMS}
\end{AMS}

\section{Introduction}

The conservation is known to be crucial to preserve in simulating the coupled flow and transports \cite{john2017divergence}. One important example of the coupled flow and transports is the non-Newtonian flow models  \cite{Lee2009,leethesis2004,kang2016three}. There are a number of numerical schemes dedicated to preserve the conservation in literature, which include the well-known mixed finite element methods \cite{ewing1984convergence}, discontinous Galerkin finite element methods \cite{riviere2008discontinuous} and other relevant methods \cite{sun2009locally,LeeLeeWhi15,choi;jo;kwak;lee}. The issue of mass conservation in numerical methods for flow coupled to transport can be found at 
\cite{localconservation2002,localconservation1998,DawsonC_SunS_WheelerM-2004aa, sun2009locally} and references cited therein. On the other hand, to the best knowledge of authors, only a few discussion can be found in literature that show rigorously why the locally conservative flux is important \cite{john2017divergence,DawsonC_SunS_WheelerM-2004aa}. The most rigorous discussion is believed to be at the celebrating work by Dawson, Sun and Wheeler \cite{DawsonC_SunS_WheelerM-2004aa}. It discusses the importance of the conservative flux for computing solutions to transports using the notion of compatibility condition imposed on the flow equation. The compatibility condition is a necessary condition for the flux to guarantee that the discrete system for the transports possesses the so-called zeroth-order accuracy. The zeroth-order accuracy means that the constant true solution can be recovered. We observe that the compatibility condition is basically the local conservation, which is a bit stronger than the standard local conservation. The standard local conservation refers to conservation of mass over a control volume or an element in a finite element or finite difference grid \cite{CockburnB_DawsonC-2002aa, Hughes2000467, LeeLeeWhi15}. 

In this paper, our concern is solely restricted to the discontinuous Galerkin finite element discretizations for both flow (pressure) and transports. For the sake of presentation, we shall denote the degree of polynomials used to approximate the pressure by $k_p$ and the degree of polynomials used to approximate the solution to the transports by $k_c$, respectively. We first introduce a new concept of the local conservation, and then investigate its relationship with the compatibility condition. More precisely, let us assume that the exact flux satisfies the continuity equation 
\begin{equation}
\nabla \cdot \bu = f, \quad \mbox{ in } \Omega.
\end{equation} 
Then for a given triangulation $\mathcal{T}_h$ and $k \geq 0$, we say that the numerical flux $\bU$ is \textit{locally conservative of degree $k$} if and only if it satisfies the following equation: for all $T \in \mathcal{T}_h$, 
\begin{equation}\label{cons}
\int_T (\nabla \cdot \bU ) w \, dx = \int_T f w \, dx, \quad \forall w \in \mathbb{P}_k(T),  
\end{equation}
where $\mathbb{P}_k(T)$ is the polynomials on $T$ of degree at most $k$. We remark that the standard local conservative flux means that it is locally conservative of degree $k = 0$. Of course, if $\bU$ is strongly conservative, then it is locally conservative of any degree. Namely, the newly proposed local conservation is a more general version of the standard local conservation \cite{LeeLeeWhi15}, but it is still weaker than the strong conservation. 

The first main result in the paper is that if the numerical flux $\bU$ satisfies the local conservation of degree $k \geq 2k_c$, then the solution to the transports satisfies the $L^2$ norm stability. Secondly, if the flux $\bU$ is locally conservative of degree $k \geq k_c$, which is exactly the compatibility condition, introduced in \cite{DawsonC_SunS_WheelerM-2004aa} then the zeroth-order accuracy can be obtained. Note that the locally conservative flux of degree $k \geq k_c > 0$ can be obtained only by using IIPG (Incomplete Interior Penalty Galerkin methods) \cite{wheeler1978elliptic}, which was discovered in \cite{DawsonC_SunS_WheelerM-2004aa}. SIPG (Symmetric Interior Penalty Galerkin methods) and NIPG (Nonsymmetric Interior Penalty Galerkin methods) \cite{shu_wheeler_2005} can also lead to the locally conservative flux of degree $k$ for $k = k_c = 0$. Finally, for $k_c = 0$, if the flux is locally conservative of degree $k \geq k_c$, then the discrete solution obtained from the discontinuous Galerkin finite element scheme for the transports inherits the positivity and maximum principle preserving properties of the physical solution. We note that the locally conservative flux of degree $k = 0$ can also be obtained by using the enriched Galerkin finite elements introduced in \cite{sun2009locally,LeeLeeWhi15}. We remark that the numerical results are traced back to that of Dawson, Sun and Wheeler \cite{DawsonC_SunS_WheelerM-2004aa}. Thus, the proof of the positivity and maximum principle has been elusive for many years in literature. Our technical results are based on important relationships between three schemes: the Lesaint-Raviart discontinuous Galerkin finite element method \cite{LR1974}, the Brezzi-Marini-S{\"u}li jump stablized discontinuous Galerkin finite element method \cite{brezzi2004discontinuous}
and the characteristic method along the streamline discussed in \cite{baranger1999natural}. We identified a relationship between Brezzi-Marini-S{\"u}li jump stabilized discontinuous Galerkin finite element, which is an independently interesting result. We remark that there are tremendous efforts to design positivity and maximum principle preserving scheme in literature, which include \cite{Chu2010JCP, Chu2012JSC,Chu2011review,Chu2022quadrature}. However, these are different schemes based on limiters. The main result of the paper strengthens the result by Dawson, Sun and Wheeler \cite{DawsonC_SunS_WheelerM-2004aa} and it is summarized in Table.\ref{tab:results}.

\begin{table}[h]
    \centering
    \caption{Summary of Main Results.} 
   \label{tab:results}
    \begin{tabular}{|c|c|} \hline 
         Local Conservation of Degree $k$&  Transport's solution \\ \hline 
        $k \geq 2k_c \geq 0$ & $L^2$- Stability\\ \hline 
        $k \geq k_c\geq0$ & Zeorth-Order Accuracy\\ \hline
        $k \geq 2k_c = 0$ & Positivity and Maximum Principle\\ \hline
    \end{tabular} 
    \end{table}

Throughout the paper, we use the standard notation for the Sobolev spaces such as $L^2(\Omega)$, which is the space of square integrable functions on $\Omega$, $H^m(\Omega)$ and $W^{m,p}(\Omega)$ with $1 \leq m$ and $1 \leq p \leq \infty$. We shall denote the discontinuous Galerkin finite element scheme simply by DG. In some cases, we shall specify the degree of polynomials used to construct DG. Namely, ${\rm DG_\ell}$ refers to the discontinuous Galerkin method with polynomials of degree $\ell$.   

The rest of the paper is organized as follows. In \S \ref{sec:intro}, we introduce the system of coupled flow and transports and discuss the continuous maximum and/or minimum principle. In particular, we point out how the conservative property of the flux is correlated with the establishment of the maximum principle. In \S \ref{dtransportflow}, we introduce the discontinuous Galerkin finite element schemes for the coupled flow and transports. A new concept of the local conservation of the flux is introduced and compared with the compatibility condition. In \S \ref{pmax}, we present the relationship between the Lesaint-Raviart-DG and BMS-DG schemes. We then discuss the relationship between Lesaint-Raviart-DG scheme and the characteristic methods. These are used to establish the positivity and the maximum principle. In \S \ref{num:ex}, we present a number of numerical tests to confirm our theory and provide a concluding remark in \S \ref{con}. 


%
%
\section{Governing Equations}\label{sec:intro}

In this section, we shall present the governing coupled flow and transports. We shall suppose $\Omega\subseteq \mathbb{R}^d$ to be a bounded polygon (d = 2) or polyhedron (d = 3) with Lipschitz boundary $\partial\Omega$. We consider the equation of conservation of mass
\begin{equation}\label{eqn:conservation_mass}
\nabla\cdot \bu = f \quad\mbox{ in } \Omega,   
\end{equation}
where $f$ is an external source/sink function such that at souce, $f > 0$ while $f < 0$ at sink. Here the velocity $\bu : \Omega \mapsto \mathbb{R}^d$ is defined by the Darcy's law:  
\begin{equation}\label{eqn:main_velocity}
\bu = -\dfrac{K}{\mu(c)}\nabla p \quad \mbox{ in } \Omega, 
\end{equation}    
where $p:\Omega \mapsto \mathbb{R}$ represents the pressure, 
$K$ is the permeability coefficient, and $\mu(c)$ is the fluid viscosity. Define $\bkappa := \bkappa(c) := K/\mu(c)$. For the boundary, we decompose $\partial \Omega$ into two parts $\Gamma_D$ and $\Gamma_N$ so that $\overline{\partial \Omega} = \overline{\Gamma}_D \cup \overline{\Gamma}_N$ and we then impose
\begin{subequations}\label{eqn:main_pressure_bc}
\begin{align}
p &= g_{_D}, \quad  \mbox{ on } \Gamma_D,  \\ 
\bu \cdot \bn &= g_{_N}, \quad \mbox{ on } \Gamma_N, 
\end{align}
\end{subequations}
where $\bn$ denotes the outward pointing unit normal to $\partial \Omega$, $g_D \in L^2(\Gamma_D)$ and $g_N \in L^2(\Gamma_N)$ are Dirichlet and Neumann boundary conditions, respectively.

We shall then consider the transport of chemically reactive species coupled with the flow equation \eqref{eqn:main_velocity} (see \cite{DawsonC_SunS_WheelerM-2004aa}). Transport equations for each chemical species are given as follows: 
\begin{equation}
\partial_t (\phi c_i) + \nabla \cdot (\bu c_i - D(\bu) \nabla c_i) = f c^*_i + R_i(c_1,\cdots,c_{n_c}), 
\end{equation} 
where $\partial_t$ is the time derivative, $c_i$ denotes the concentration of species $i=1,\cdots,n_c$ with $n_c$ being the number of chemical species, $D(\bu)$ is the velocity-dependent diffusion/dispersion tensor which is symmetric and positive semi-definite, $\phi$ is the volumetric factor such as porosity, and $R_i$ is a chemical reaction term. We note that the concentration $c^*_i=\widetilde{c}_i$ is specified at sources where $f > 0$, while it is unknown at sinks where $f < 0$. 
The advection and diffusion of chemical species are governed by a velocity field $\bu$, which can be shown to satisfy the continuity equation, conservation of mass equation given as follows, using the fact that $\sum_i c_i$ is constant and assuming that $\sum_i R_i = 0$. On the other hand, following \cite{DawsonC_SunS_WheelerM-2004aa}, we shall simply consider the single species case. Namely, we assume that the transport equation is given as follows: 
\begin{eqnarray}\label{eqn:full_incomp_a}
\partial_t c + \nabla \cdot (\bu c - D(\bu) \nabla c)  = fc^*, \quad &\mbox{ in } \Omega \times (0,\mathbb{T}], 
\end{eqnarray}
where $D(\bu)$ is assumed to be positive semi-definite. The boundary of $\Omega$ for transport system, denoted by $\partial \Omega$, is decomposed into two parts 
$\Gamma_{\rm I}$ and $\Gamma_{\rm O}$, the inflow and outflow boundary, respectively (i.e. $\overline{\partial \Omega} = \overline{\Gamma}_{\rm I}  \cup \overline{\Gamma}_{\rm O} $). They are defined as 
\begin{equation}
\Gamma_{\rm I} := \{ \bx \in \partial \Omega : \bu\cdot\bn < 0\} \quad \mbox{ and } \quad \Gamma_{\rm O} := \{ \bx \in \partial \Omega : \bu\cdot\bn \geq 0\},
\label{eqn:main_bd_def}
\end{equation}
where $\bn$ denotes the unit outward normal vector to $\partial \Omega$. 
For the boundary, we employ the following boundary conditions
\begin{subequations}\label{eq:bc_transport}
\begin{align}
(c\bu- D(\bu) \nabla c) \cdot \bn &= c_{\rm I} \bu \cdot \bn,&\mbox{ on } \Gamma_{{\rm I}} \times (0,\mathbb{T}],  \\
D(\bu)\nabla c \cdot \bn &= 0,  &\mbox{ on } \Gamma_{{\rm O}} \times (0,\mathbb{T}], 
\end{align}
\end{subequations}
where $c_{\rm I}$ is the inflow function. The initial conditions are set as $c(\bx,0) = c^0$. 

\subsection{Continuous Maximum Principle}\label{sec:mp}
In this section, we shall establish the maximum principle for the continuous case.  We shall see that the continuous maximum principle is strongly affected by the continuity equation: 
\begin{equation}
\nabla \cdot \bu = f. 
\end{equation}


First, we define a notation for a given function $\phi$ as follows:
\begin{equation}
\phi_{_+} :=\max\{\phi,0\} \quad \mbox{ and } \quad 
\phi_{_-} :=\max\{-\phi,0\}.
\end{equation}
Secondly, we show that the following continuous weak maximum principle holds under the condition that $\bu$ satisfies the strong conservation. 
\begin{theorem}\label{thm_wmp}
For the initial condition $c^0$, the boundary condition $c_{\rm I}$ and specified source $\widetilde{c}$, we let $M=\max\{|c^0|,|c_{\rm I}|,|\widetilde{c}|\}$. Under the condition of the strong conservation \eqref{eqn:conservation_mass}, we can show that the solution to equations \eqref{eqn:full_incomp_a}--\eqref{eq:bc_transport} satisfies $|c|\leq M$.
\end{theorem}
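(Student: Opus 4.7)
The plan is to prove $|c| \le M$ by a standard truncation/energy argument, testing the transport equation against $w := (c-M)_+$ (and symmetrically against $(c+M)_-$ for the lower bound). The key role of the strong conservation $\nabla\cdot\bu=f$ will appear when we move the divergence off of the advective flux $\bu c$.

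First, I would rewrite the source term using the convention $c^* = \widetilde c$ where $f>0$ and $c^*=c$ where $f<0$: this gives $f c^* = f_+ \widetilde c - f_- c$. Then I test \eqref{eqn:full_incomp_a} with $w=(c-M)_+\ge 0$ and integrate over $\Omega$. The time-derivative term becomes $\frac12\tfrac{d}{dt}\|w\|_{L^2}^2$ (since $\partial_t c=\partial_t w$ on $\{c>M\}$). For the advection, I integrate by parts on $\int_\Omega \nabla\cdot(\bu c)\,w$, then use $\nabla c=\nabla w$ and $c=w+M$ on the support of $w$ together with $\nabla\cdot\bu=f$ to consolidate the interior contributions into $\tfrac12\int_\Omega f w^2 + M\int_\Omega f w$ plus boundary terms. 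It is precisely here that strong conservation is indispensable: without it, an unconquerable $\int_\Omega(\nabla\cdot\bu)w^2$ residual would survive. For the diffusion, integration by parts yields $\int_\Omega D(\bu)\nabla w\cdot\nabla w\ge 0$ in the interior, and boundary contributions that combine with the Robin-type inflow condition to $-\int_{\Gamma_I}(c-c_I)\bu\cdot\bn\, w$ and a vanishing outflow piece.

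Next I would collect all boundary terms on $\Gamma_I$. Using $c = w+M$ on the support of $w|_{\Gamma_I}$, the inflow contributions reduce to $-\tfrac12\int_{\Gamma_I}\bu\cdot\bn\, w^2 - \int_{\Gamma_I}(M-c_I)\,\bu\cdot\bn\, w$, both of which are nonnegative because $\bu\cdot\bn<0$ on $\Gamma_I$ and $M\ge |c_I|\ge c_I$. The outflow boundary contributes $\tfrac12\int_{\Gamma_O}\bu\cdot\bn\, w^2\ge 0$. On the right-hand side, the identity $f c^* w = f_+\widetilde c\, w - f_- w^2 - M f_- w$ (again using $c=w+M$ where $w>0$) cancels exactly against the pieces $\tfrac12\int f w^2 + M\int f w$ generated by the advection, leaving a clean bound $\int_\Omega f_+\widetilde c\, w \le M\int_\Omega f_+ w$ that follows from $|\widetilde c|\le M$.

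Assembling everything leaves an inequality of the form
\begin{equation*}
\frac12\frac{d}{dt}\|w\|_{L^2(\Omega)}^2
+\frac12\int_\Omega(f_+ + f_-)\,w^2
+\int_\Omega D(\bu)\nabla w\cdot\nabla w
+ (\text{nonneg.\ boundary terms}) \;\le\; 0.
\end{equation*}
Since $w(\cdot,0)=(c^0-M)_+\equiv 0$ because $M\ge |c^0|$, Gronwall (in fact, plain monotonicity) yields $w\equiv 0$, i.e.\ $c\le M$. The lower bound $c\ge -M$ follows from the same argument with $v:=(c+M)_-$, using $c = -M - v$ on its support; the algebra is identical modulo signs, and the bounds $M\ge |c_I|,|\widetilde c|$ again ensure the right-hand contributions are absorbable. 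The main technical obstacle is the bookkeeping in the second step: one has to track that the advective residue $\tfrac12\int f w^2 + M\int f w$ exactly matches the structure of the source term $fc^*w$ after the $c^*=\widetilde c/c$ dichotomy is unfolded; this cancellation is exactly what $\nabla\cdot\bu=f$ is doing, and it is the step that motivates the paper's insistence on local-conservation-of-degree-$k$ for the discrete analogue.
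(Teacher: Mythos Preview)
Your proposal is correct and follows essentially the same truncated-energy argument as the paper: test with $w=(c-M)_+$, use $\nabla\cdot\bu=f$ to control the advective residue, combine the inflow/outflow boundary terms with the Robin condition, and conclude $\tfrac{d}{dt}\|w\|_{L^2}^2\le 0$. The only cosmetic difference is bookkeeping: the paper first rewrites the equation in non-divergence form and packages the source as $f(c^*-c+\tfrac12 v)v$ before a sign case-split on $f$, whereas you split $f=f_+-f_-$ and substitute $c=w+M$ explicitly; both routes arrive at the same inequality.
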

\begin{proof}
By the equation \eqref{eqn:full_incomp_a}, we have for $v \in H^1(\Omega)$, 
\begin{equation}\label{weak_form1} 
\int_\Omega \partial_t c \, v \, dx + \int_\Omega\nabla\cdot\bu c vdx +\int_\Omega \bu\cdot \nabla c \, v\,dx  - \int_\Omega \nabla\cdot (D(\bu) \nabla c) v\, dx 
= \int_\Omega fc^*vdx. 
\end{equation}
Using the integration by parts and the strong conservation  \eqref{eqn:conservation_mass}, we get 
\begin{equation}\label{eq:weakform}
\begin{aligned}
&\int_\Omega \partial_t c \, v \, dx  +\int_\Omega \bu\cdot \nabla c \, v\,dx  + \int_\Omega D(\bu) \nabla c \cdot \nabla v\, dx \\
&=\int_\Omega f(c^*-c)vdx+\int_{\partial \Omega}D(\bu)\nabla c\cdot\bn vdx.
\end{aligned}
\end{equation}
We shall show that $c \leq M$ by setting $v := (c-M)_{_+}$ in \eqref{eq:weakform}. We then observe that the following identity holds: 
\begin{equation}\label{eq4}
\begin{aligned}
\int_\Omega \bu \cdot \nabla c \, v\, dx &= \int_\Omega \bu\cdot \nabla c \, (c-M)_{_+} \, dx = \frac{1}{2} \int_\Omega \nabla [(c-M)_{_+}]^2 \cdot \bu\, dx \\
&=\frac{1}{2} \int_{\partial \Omega} v^2 \, \bu\cdot\bn \, ds-\frac{1}{2}\int_{\Omega}f v^2dx.
\end{aligned}
\end{equation}
Using this identity and the boundary condition $\eqref{eq:bc_transport}$, the equation \eqref{eq:weakform} can be shown to satisfy the following inequality: 
\begin{equation*}\label{weak_form2} 
\begin{aligned}
&\quad\frac{1}{2} \frac{d}{dt} \int_\Omega |( c - M)_{_+}|^2 \, dx + \int_\Omega D(\bu) |\nabla (c - M)_{_+}|^2 \, dx \\
& = \int_\Omega f(c^*-c)vdx +\int_{\partial \Omega} D(\bu) \nabla c\cdot \bn \, v\, ds - \frac{1}{2} \int_{\partial \Omega} v^2 \, \bu\cdot\bn \, ds + \frac{1}{2}\int_\Omega fv^2dx  \\ 
&  = \int_{\Gamma_{\rm I}} D(\bu) \nabla c\cdot\bn \, v \, ds  + \int_{\Gamma_{\rm O}} D(\bu) \nabla c\cdot\bn \, v \, ds + \int_\Omega f\left(c^*-c+\frac{1}{2}v\right)vdx \\ 
& \quad - \frac{1}{2} \int_{\Gamma_{\rm I}} [(c- M)_{_+}]^2 \, \bu \cdot \bn \, ds  - \frac{1}{2} \int_{\Gamma_{\rm O}} [(c- M)_{_+}]^2 \, \bu \cdot \bn \, ds  \\ 
&\leq \int_{\Gamma_{\rm I}} D(\bu) \nabla c\cdot\bn \, v \, ds - \frac{1}{2} \int_{\Gamma_{\rm I}} [(c- M)_{_+}]^2 \, \bu \cdot \bn \, ds+ \int_\Omega f\left(c^*-c+\frac{1}{2}v\right)vdx \\ 
&= \int_{\Gamma_{\rm I}} \left(c-c_{\rm I}-\frac12(c- M)_{_+}\right)(c- M)_{_+} \, \bu \cdot \bn \, ds\\
&\quad+\int_\Omega f\left(c^*-c+\frac{1}{2}(c-M)_{_+}\right)(c-M)_{_+} dx.  
\end{aligned}
\end{equation*}
We shall assume $c > M$. Then, we have 
\begin{equation*}
c-c_{\rm I}-\frac12(c- M)_{_+}=\frac{c}{2}+\frac{M}{2}-c_{\rm I}\ge M-c_{\rm I}\ge0.
\end{equation*}
Now, we consider the following identity: 
\begin{equation*} 
f\left(c^*-c+\frac{1}{2}(c-M)_{_+}\right)(c-M)_{_+} = \left \{  \begin{array}{l} 
f\left(\tilde{c}-c+\frac{1}{2}(c-M)_{_+}\right)(c-M)_{_+}, \,\, \mbox{ for } f > 0 \\
\frac{1}{2} f (c-M)_{_+} (c-M)_{_+},\,\,  \mbox{ for } f \leq 0.
\end{array} 
\right. 
\end{equation*}
This means 
\begin{equation*} 
f\left(c^*-c+\frac{1}{2}(c-M)_{_+}\right)(c-M)_{_+} \leq 0. \end{equation*} 
Therefore, we conclude that 
\begin{equation}
\frac{1}{2} \frac{d}{dt} \int_\Omega |( c - M)_{_+}|^2 \, dx \leq 0 
\end{equation}
and arrive that $c \leq M$ if $c^0 \leq M$. This is a contradiction to $c > M$ and it proves that $c \leq M$. The proof to show $c \ge -M$ can be done by setting $v := (c + M)_{_-}$ in \eqref{eq:weakform}. The rest of the process is similar and thus, we omit the details. This completes the proof that $|c|\leq M$.
\end{proof}

\section{Discontinuous Galerkin Finite Element Formulation for flow and transport}\label{dtransportflow}

In this section, we shall consider the discontinuous Galerkin finite element formulation of the flow coupled with the transport. Let $\mathcal{T}_h$ be the shape-regular triangulation by a family of partitions of $\O$ into $d$-simplices $\K$ (triangles/squares in $d=2$ or tetrahedra/cubes in $d=3$), $h_T$ denote the diameter of $T$ and $h = \max_{T \in \mathcal{T}_h} h_T$. 
Also we denote by $\Eh$ the set of all edges, and by $\Eho$ and $\Ehb$ the collection of all interior and boundary edges, respectively. 

We shall also introduce standard tools such as jumps and averages of scalar and vector valued functions across the edges of $\mathcal{T}_h$. Let $e$ be an interior edges shared by two elements $T^\pm$. We define the unit normal vectors $\bn^\pm$ on $e$ pointing exterior to $T^\pm$, respectively. For a function $\phi$ that is piecewise smooth on $\mathcal{T}_h$, with $\phi^\pm = \phi|_{T^\pm}$, we define 
\begin{equation}
\av{\phi} = \frac{1}{2} (\phi^+ + \phi^-) \quad \mbox{ and } \quad \jump{\phi} = \phi^+ \bn^+ + \phi^-\bn^- \quad \forall e \in \mathcal{E}_h^o,
\end{equation}
where $\mathcal{E}_h^o$ is the set of interior edges $e$. For a vector-valued function $\bm{\tau}$, piecewise smooth on $\mathcal{T}_h$, with $\bm{\tau}^\pm = \bm{\tau}|_{T^\pm}$, we define 
\begin{equation}
\av{\bm{\tau}} = \frac{1}{2} (\bm{\tau}^+ + \bm{\tau}^-) \quad \mbox{ and } \quad \jump{\bm{\tau}} = \bm{\tau}^+ \bn^+ + \bm{\tau}^-\bn^- \quad \forall e \in \mathcal{E}_h^o.
\end{equation}
For a set of boundary edges $e \in \mathcal{E}_h^\partial$, let 
\begin{equation}
\jump{\phi} = \phi \bn, \quad \mbox{ and } \quad \av{\bm{\tau}} = \bm{\tau}, \quad \forall e \in \mathcal{E}_h^\partial.
\end{equation}
The space $H^{s}(\Th)$ $(s\in \mathbb{R})$ is the set of element-wise $H^{s}$ functions on $\mathcal{T}_h$ with $s \geq 1$, and $L^{2}(\Eh)$ refers to the set of functions whose traces on the elements of $\Eh$ are square integrable. Let $\mathbb{P}_k(\K)$ denote the space of polynomials of degree at most $k$ on $T$.
Define the space of piecewise discontinuous polynomials of degree $k$ by
\begin{equation}
V_{h,k} := \left \{ p \in L^2(\Omega) | \ p_{|_{\K}} \in \mathbb{P}_k(\K), \ \forall \K \in \mathcal{T}_h \right \}. 
\end{equation}
We also use the following notations:
\begin{alignat*}{2}
(v,w)&:=\dyle\sum_{\K \in \Th} \int_{\K} v\, w dx \qquad &&\forall\,\, v ,w \in L^{2} (\mathcal{T}_h), \\
\langle v, w\rangle_{\Eh}&:=\dyle\sum_{e\in \Eh} \int_{e} v\, w \,ds \qquad &&\forall\, v, w \in L^{2}(\Eh).
\end{alignat*}

\subsection{DG formulation for the flow and the concept of the local conservation}
In this section, we use $V_{h,k_p}$ as the DG space for the pressure approximation and present the DG formulation for the flow equation, which is given as follows: 
\begin{subequations}\label{eq:flow}
\begin{align}
\bu &= -\kappa \nabla p, \quad \mbox{ in } \Omega, \\
\nabla \cdot \bu &= f, \quad \mbox{ in } \Omega,
\end{align}
\end{subequations} 
which is subject to the boundary conditions \eqref{eqn:main_pressure_bc}. The interior penalty DG formulation \cite{wheeler1978elliptic,RiviereB_WheelerM_GiraultV-2001aa} for \eqref{eq:flow} is then given as: Find $P \in V_{h,k_p}$ such that
\begin{equation}\label{floweq}
\begin{aligned}
&\quad(\kappa \nabla P, \nabla w) - \langle \av{\kappa \nabla P}, \jump{w} \rangle_{\mathcal{E}_h} + \langle \sigma \jump{P}, \jump{w} \rangle_{\mathcal{E}_h^o} + \langle \sigma (P-g_D), w \rangle_{\Gamma_D} \\
&\quad + \theta_f  \langle \av{\kappa \nabla w}, \jump{P}  \rangle_{\mathcal{E}_h^o} +\theta_f \langle \kappa \nabla w \cdot\bn, (P -g_D) \rangle_{\Gamma_D} =  (f,w), \quad \forall w \in V_{h,k_p}, 
\end{aligned}
\end{equation}
where $\sigma$ is the penalty parameter and it is given as $\sigma = O(1/h)$. Note that the parameter $\theta_f$ determines the type of DG, namely, $\theta_f = 0$ for IIPG, $\theta_f = -1$ for SIPG and $\theta_f = 1$ for NIPG  \cite{de2012l2}. 
We shall now discuss how to define the numerical flux. In fact, it is standard to define the flux using $P$, the solution to \eqref{eq:flow} in the following manner: 
\begin{subequations}\label{fluxeq}
\begin{align}
{\bf{U}} &=  -\kappa \nabla P, \quad \mbox{ in } T,~\forall T \in \mathcal{T}_h, \\
{\bf{U}} &= -\av{\kappa \nabla P} + \sigma \jump{P}, \quad \mbox{ on } e,~\forall e \in \mathcal{E}_h^o, \\
{\bf{U}}\cdot \bn &= g_N, \quad \mbox{ on } e, ~\forall e \in \Gamma_N, \\
{\bf{U}}\cdot \bn &= -(\kappa \nabla P) \cdot\bn + \sigma (P - g_D),~\mbox{ on } e, \quad\forall e \in \Gamma_D, 
\end{align}
\end{subequations}
This means that the flux $U$ satisfies the following equation: 
\begin{eqnarray}\label{fluxeq1} 
&& - ({\bf{U}}, \nabla w) + \langle {\bf{U}}, \jump{w} \rangle_{\mathcal{E}_h^o} + 
\langle {\bf{U}}\cdot \bn, w \rangle_{\partial \Omega} + \theta_f \langle \av{\kappa \nabla w}, \jump{P}  \rangle_{\mathcal{E}_h^o} \\
&&  +\theta_f \langle \kappa \nabla w \cdot\bn, (P -g_D) \rangle_{\Gamma_D} =  (f,w), \quad \forall w \in V_{h,k_p}.  \nonumber 
\end{eqnarray} 
Necessarily, the flux satisfies the standard local conservation. 
\begin{equation}
\int_{\partial T} {\bf{U}} \cdot \bn \, ds = \int_T f \, dx. 
\end{equation}
On the other hand, if $\theta_f = 0$, then the flux ${\bf{U}}$ satisfies the following equation: 
\begin{subeqnarray*}
- ({\bf{U}}, \nabla w) + \langle {\bf{U}}, \jump{w} \rangle_{\mathcal{E}_h^o} + 
\langle {\bf{U}}\cdot \bn, w \rangle_{\partial \Omega}
 =  (f,w), \quad \forall w \in V_{h,k_p}.    
\end{subeqnarray*} 
Equivalently, for each $T \in \mathcal{T}_h$, the flux $\bU$ satisfies the following equation:   
\begin{equation}\label{conser}
\int_T \nabla \cdot {\bf{U}} w \, dx = \int_T f w \, dx, \quad \forall w \in V_{h,k_p}. 
\end{equation}
We can view this identity as a stronger version of the local conservation since the standard local conservation is the case when the equation \eqref{conser} holds for $w$ being the piecewise constant function. Furthermore, if $k_p$ is replaced by $k_c$, then it has been discussed as the compatibility condition in \cite{DawsonC_SunS_WheelerM-2004aa}, which will be discussed in more details in \S \ref{compx} below. Thus, we are motivated to introduce this condition as a new concept of the local conservation. 

We are in a position to state the main definition of the new local conservation, i.e., the local conservation of degree $k$.  
\begin{definition}\label{def} 
The flux $\bU$ is said to be locally conservative of degree $k$ with respect to the triangulation $\mathcal{T}_h$ if and only if for all $T \in \mathcal{T}_h$, it holds that 
\begin{equation} 
\int_T \nabla \cdot \bU \, w \, dx = \int_T f\, w \, dx, \quad \forall w \in \mathbb{P}_{k}(T). 
\end{equation} 
\end{definition} 
In \S \ref{pmax}, we discuss the importance of the local conservation, which include $L^2$norm stability, zeroth order accuracy, positivity and maximum principle preserving. We remark that if $\nabla \cdot \bU = f$, i.e., strong conservation holds, then such a local conservation is necessarily true.

\subsection{DG formulation for the transport}\label{BMS}
In this section, we present the DG formulation for transport, primarily based on the work of Brezzi, Marini, and Süli \cite{brezzi2004discontinuous} for the hyperbolic part of the transport. The elliptic part follows the standard interior penalty method by Wheeler \cite{wheeler1978elliptic}, similar to the flow case. 

In our discussion, we shall assume that the mesh is sufficiently small or the flux $\bU$ is sufficiently smooth so that 
\begin{equation}
\bU \cdot \bn_e|_e \mbox{ does not change its sign for all } e \in \mathcal{E}_h,  
\end{equation}
where $\bn_e$ is the normal to the edge $e$. This is crucial to establish the relationship between Lesaint-Raviart DG \cite{LasaintP_RaviartP-1974aa} and Brezzi-Marini-S{\"u}li DG. Under this assumption, for any given edge $e \in \mathcal{E}_h^o$, which is the common edge of two triangles $T_1$ and $T_2$, we can determine which triangle is upstream triangle $T^-$ and which is the downstream triangle $T^+$. Specifically, if $\bU \cdot \bn^1 \geq 0$, then $T_1=T^-$ is the upstream triangle, and $T_2=T^+$ is the downstream triangle. Otherwise, we denote $T_1 = T^+$ and $T_2 = T^-$. For $e \in \mathcal{E}_h^\partial$, if $e \in \Gamma_-$, then $T = T^+$ while if $e \in \Gamma_+$, then $T = T^-$. 

We shall consider the transport equation \eqref{eqn:full_incomp_a} and boundary condition \eqref{eq:bc_transport} with $\bu$ replaced by $\bU$,  i.e., 
\begin{equation}\label{eq:transport}
\partial_t c + \nabla \cdot ( \bU c - D \nabla c) = f c^*, \quad \mbox{ in } \Omega \times (0,\mathbb{T}], 
\end{equation} 
subject to the boundary conditions 
\begin{subequations} 
\begin{align}
(c\bU - D\nabla c) \cdot \bn &= c_{\rm I} \bU \cdot \bn, \quad \Gamma_{\rm I} \times (0,\mathbb{T}] \\
D\nabla c \cdot \bn &= 0, \quad \Gamma_{\rm O} \times (0,\mathbb{T}]. 
\end{align} 
\end{subequations}
By applying a simple Euler time discretization and Brezzi-Marini-S{\"u}li DG to \eqref{eq:transport}, we have the following Euler-DG formulation: 
Given the old time level solution $C^{\rm old}$, find $C \in V_{h,k_c}$ such that 
\begin{equation}\label{transport-DG}
\begin{aligned}
&\gamma (C, w) - ({\bf{U}} C - D \nabla C, \nabla w) 
+ \left \langle \left ( \av{C {\bf{U}}} + c_e \jump{C} - \av{D\nabla C} \right ), \jump{w} \right \rangle_{\mathcal{E}_h^o}  \\
& + \theta_c \langle \av{D\nabla w}, \jump{C} \rangle_{\mathcal{E}_h^o} + \langle c_{\rm I} {\bf{U}}\cdot \bn, w\rangle_{\Gamma_{\rm I}} + \langle C{\bf{U}}\cdot\bn, w \rangle_{\Gamma_{\rm O}} + \langle \sigma_D \jump{C}, \jump{w} \rangle_{\mathcal{E}_h^o} \\
& = (fC^*, w) + (g,w), \qquad \forall w \in V_{h,k_c},  
\end{aligned}
\end{equation}
where $g = \gamma C^{\rm old}$ with $\gamma = \dfrac{1}{\Delta t}$, $c_e = |\bU \cdot \bn|/2$, the jump stabilized term, as discussed in \cite{brezzi2004discontinuous}, and the parameter $\theta_c$ determines the type of DG scheme for the diffusion part of the transport, i.e., for $\theta_c = 0$, it gives IIPG, while $\theta_c = -1$ and $\theta_c = 1$ give SIPG and NIPG, respectively. $\sigma_D$ is the penalty parameter for the diffusive term. Note that $\sigma_D = 0$ if $D = 0$. In particular, for the pure hyperbolic case $D = 0$, the equation \eqref{transport} reduces to 
\begin{equation}\label{transport}
\begin{aligned}
&\gamma (C, w) - ({\bf{U}} C,  \nabla w) 
+ \left \langle \left ( \av{C {\bf{U}}} + c_e \jump{C} \right ), \jump{w} \right \rangle_{\mathcal{E}_h^o}  \\
&+ \langle c_{\rm I} {\bf{U}}\cdot \bn, w\rangle_{\Gamma_{\rm I}} + \langle C{\bf{U}}\cdot\bn, w \rangle_{\Gamma_O} 
= (fC^*, w) + (g,w), \quad \forall w \in V_{h,k_c},
\end{aligned}
\end{equation}
which can also be written as the following formulation:  
\begin{equation}\label{upwind} 
\begin{aligned}
&\sum_{T \in \mathcal{T}_h} \int_T \gamma (C w - C (\bU \cdot \nabla w)\, dx + \sum_{e \not \subset \Gamma_-} \int_e \av{\bU C}_u \cdot \jump{w} \, ds \\
&=\int_\Omega fC^* w \, dx + \int_\Omega g\,w\, dx - \sum_{e \subset \Gamma_-} \int_e (\bU \cdot \bn) c_{\rm I} w \, ds, \quad \forall w \in V_{h,k_c},
\end{aligned}
\end{equation}
where $\av{\bU C}_u=\av{C {\bf{U}}} + c_e \jump{C}$ represents the upwind value of $\bU C$. More precisely, it is defined as 
across the edge of an element over which it is evaluated. We recall the definition of the upwind value for $\bU C$, given through
\begin{eqnarray}
\av{\bU C}_u = \left \{ \begin{array}{ll} 
\bU C^+ & \mbox{ if } \bU \cdot \bn^+ > 0 \\
\bU C^- & \mbox{ if } \bU \cdot \bn^- > 0 \\
\bU \av{C} & \mbox{ if } \bU \cdot \bn^\pm = 0. 
\end{array} \right. 
\end{eqnarray}

\subsection{Compatibility condition and its relationship to the local conservation}\label{compx}

In this section, we shall review the compatibility condition pioneered by Dawson, Sun and Wheeler \cite{DawsonC_SunS_WheelerM-2004aa}. Basically, the compatibility condition is the minimial requirement imposed on the flux approximation to obtain meaningful solution to the transport. The meaningfulness of the solution to the transport was found at two requirements. One is the global conservation and the other is the zeroth-order accuracy. For the schemes in this paper, the requirement of the global conservation is generally independent of how $\bU$ is computed \cite{DawsonC_SunS_WheelerM-2004aa}. Thus, this does not seem to be closely related to the compatibility condition.  Rather, it is related to the zeroth-order accuracy. The zeroth-order accuracy can be considered to be a weaker version of the maximum or minimum principle. Namely, if the transport scheme can not reproduce a constant solution, it means there arise spurious sources and sinks due to numerical inaccuracies. In relation to DG formulation \eqref{upwind} of the transport, the zeroth order accuracy is shown to hold under the assumption that the numerical flux satisfies the following compatibility condition \cite{DawsonC_SunS_WheelerM-2004aa}: 
\begin{equation}
- ({\bf{U}}, \nabla w)_\Omega + \langle {\bf{U}}, \jump{w}\rangle_{\mathcal{E}_h^o} + \langle {\bf{U}}\cdot\bn, w \rangle_{\partial\Omega} = (f,w)_{\Omega}, \quad \forall w \in V_{h,k_c}. 
\end{equation} 
Note that this is obtained by setting $C=C^*=c_{\rm I}=c^0$ is a constant in the equation \eqref{transport} and it is equivalent to the following statement, i.e., for all $T \in \mathcal{T}_h$, it holds that 
\begin{equation}
\int_T \nabla \cdot \bU \, w \, dx = \int_T f \, w \, dx, \quad \forall w \in V_{h,k_c}. 
\end{equation} 
We observe that the compatibility condition is the local conservation of degree $k_c$. This observation is summarized in the following theorem: 
\begin{theorem}
For $k_c > 0$, the local conservation of degree $k_c$ can be achieved by using $k_p \geq k_c$ for the flow equation with $\theta_f = 0$ only. For $k_c = 0$, the local conservation of degree $k_c$ can be achieved by using $k_p \geq k_c$ with $\theta_f = -1, 0, 1$.
\end{theorem}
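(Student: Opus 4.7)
The plan is to derive local conservation directly from the flux identity \eqref{fluxeq1}, which is just the DG flow scheme \eqref{floweq} combined with the flux definition \eqref{fluxeq}. To probe local conservation of degree $k_c$ on a single element $T \in \mathcal{T}_h$, the natural choice of test function is a $w \in V_{h,k_p}$ that is supported only on $T$ with $w|_T \in \mathbb{P}_{k_c}(T)$. Such a $w$ is admissible precisely when $k_p \geq k_c$, which explains the polynomial-degree hypothesis in the statement.

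Inserting this test function into \eqref{fluxeq1} and integrating by parts on the volume term $-(\bU, \nabla w)_T$, I use the identity $\jump{w}|_e = w\,\bn_T$ on interior edges of $\partial T$ (since $w$ vanishes on the other side) together with $\bn = \bn_T$ on the boundary edges of $\partial T$. The flux boundary contributions then telescope, leaving the compact identity
\[
\int_T \nabla\cdot\bU\, w\,dx - \int_T fw\,dx = -\theta_f\!\left(\sum_{e \subset \partial T \cap \mathcal{E}_h^o} \int_e \av{\kappa\nabla w}\cdot\jump{P}\,ds + \sum_{e \subset \partial T \cap \Gamma_D} \int_e (\kappa\nabla w\cdot\bn)(P - g_D)\,ds\right).
\]
From this one identity the two positive assertions of the theorem drop out. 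If $\theta_f = 0$ (IIPG), the right-hand side is trivially zero for every admissible $w$, so local conservation of degree $k_p$, and a fortiori of degree $k_c$, holds. If $k_c = 0$, then $w$ is constant on $T$ and identically zero outside $T$, so $\nabla w$ vanishes on both sides of each edge of $\partial T$; hence $\av{\kappa\nabla w} = 0$ on every interior edge and $\kappa\nabla w\cdot\bn = 0$ on every Dirichlet edge, and the right-hand side vanishes regardless of the value of $\theta_f$.

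The remaining piece, and the main obstacle, is to rule out $\theta_f \in \{-1,1\}$ when $k_c > 0$; that is, to show that in this regime local conservation of degree $k_c$ genuinely fails, not merely that our argument stops working. Here I would argue by genericity: the DG pressure $P$ is in general discontinuous across interior edges, so for typical data $(f, g_D, g_N, \kappa)$ the jump $\jump{P}$ is not identically zero on $\partial T \cap \mathcal{E}_h^o$. Since $\nabla w|_T$ for $w \in \mathbb{P}_{k_c}(T)$ with $k_c \geq 1$ ranges over a sufficiently rich family of polynomial vector fields on $T$, I can choose $w$ so that the sum on the right-hand side of the displayed identity is nonzero, thereby producing an element $T$ and a test function $w \in \mathbb{P}_{k_c}(T)$ with $\int_T \nabla\cdot\bU\,w\,dx \neq \int_T fw\,dx$. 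To make this rigorous I would exhibit a minimal explicit counterexample, e.g., a two-element patch with $k_p = k_c = 1$ where $P$ and $\jump{P}$ are computed in closed form and a linear $w$ is shown to violate local conservation. All the other steps, namely the integration by parts, the edge bookkeeping, and the $k_c = 0$ case, are routine once the displayed identity is in hand.
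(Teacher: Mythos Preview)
Your approach is essentially the same as the paper's: both start from the flux identity \eqref{fluxeq1}, localize to a single element by choosing $w$ supported on $T$ (which forces $k_p \geq k_c$), and then observe that the $\theta_f$-weighted edge terms are the only obstruction to local conservation. Your displayed identity makes this obstruction explicit, and your treatment of the two sufficiency cases ($\theta_f = 0$ kills the right-hand side outright; $k_c = 0$ makes $\nabla w$ vanish on both sides of every edge so $\av{\kappa\nabla w} = 0$) is exactly the mechanism the paper has in mind, though the paper states these conclusions in a single sentence without writing out the integration by parts or the edge bookkeeping.

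Where you go further than the paper is in the necessity direction for $k_c > 0$, $\theta_f \neq 0$. The paper simply asserts that ``only $\theta_f = 0$ can lead to the local conservation of degree $k_c$'' and moves on; it offers no counterexample and no genericity argument. Your plan to exhibit a two-element patch with $k_p = k_c = 1$ where $\jump{P} \neq 0$ and a linear $w$ makes the right-hand side nonzero is the correct way to make this rigorous, and is more than the paper itself provides. So your proposal is both aligned with and more complete than the paper's own proof.
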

\begin{proof} 
We recall that the flux $\bU$ satisfies the equation \eqref{fluxeq1}. Thus, in order to obtain the local conservation of degree $k_c$, it is clear to use $k_p \geq k_c$. For $k_c > 0$, only $\theta_f = 0$ can lead to the local conservation of degree $k_c$. On the other hand, if $k_c = 0$, then any $\theta_f = -1, 0, 1$ can lead to the local conservation of degree $k_c = 0$. This completes the proof. 
\end{proof} 

\section{$L^2$-Stability, Positivity and Discrete maximum principle (DMP)}\label{pmax} 
In this section, assuming that $\bU$ is locally conservative of degree $k$, we shall establish the $L^2$ stability, positivity and maximum principle of the solution to transports. For simplicity and within the scope of our presentation, we shall restrict our focus to the non-diffusive transports case. Namely, we assume that the transport is given by the following linear hyperbolic equation:  
\begin{equation}\label{hyper} 
\partial_t c + \nabla \cdot (\bU c) = fc^*, \quad \mbox{ in } \quad \Omega\times(0,\mathbb{T}], 
\end{equation} 
subject to 
\begin{equation}
c = c_{\rm I}, \quad \mbox{ on } \quad \Gamma_- = \{ x \in \partial \Omega : \bU \cdot \bn < 0 \}. 
\end{equation}
We will start the analysis by exploring two technical tools. The first one is the equivalence relationship between the Lesaint-Raviart DG method and the BMS DG method for the hyperbolic equation \eqref{hyper}. The second one is the relationship between the Lesaint-Raviart DG method and the characteristic method along the streamline.  

\subsection{The Lesaint-Raviart DG method and its equivalence with the BMS-DG scheme} 

In this section, we shall introduce the Lesaint-Raviart DG formulation for the equation \eqref{hyper} and show that it is equivalent to the BMS-DG formulation. 

We note that similar to the streamline diffusion methods \cite{BrooksA_HughesT-1982aa,DawsonC_SunS_WheelerM-2004aa}, the Lesaint-Raviart DG formulation is based on the following reformulation of \eqref{hyper}:
\begin{equation}\label{eq: euler_nondivergence}
\partial_t c+ \bU \cdot\nabla c + (\nabla\cdot\bU)c = fc^*. 
\end{equation}
After time discretization by Euler's method, the Lesaint-Raviart formulation is designed using the discontinuous Galerkin finite element: Find $c_h \in V_{h,k_c}$ such that $c_h = c_{\rm I}$ on $\Gamma_-$ and satisfies 
\begin{equation}\label{lrform} 
\begin{aligned}
&\sum_{T \in \mathcal{T}_h} \int_T (\bU \cdot \nabla c_h)w_h + (\nabla \cdot \bU) c_h w_h + \gamma c_h w_h \, dx \\
& - \sum_{T \in \mathcal{T}_h} \int_{\Gamma_-^{\circ}(T)}(c_h^+ - c_h^-) w_h^+ (\bU \cdot \bm{n}^+) \, ds + \sum_{e\in \Gamma_-}\int_e c_h |\bU \cdot\bn| w_h \, ds \\
&= \sum_{T \in \mathcal{T}_h} \int_T (fc^* + g) w_h\, dx + \sum_{e\in \Gamma_-}\int_e c_{\rm I} |\bU \cdot \bn|w_h\, ds, \quad \forall w_h \in V_{h,k_c}, 
\end{aligned}
\end{equation}
where $g=\gamma c_h^{old}$ with $\gamma=\frac{1}{\Delta t}$, $\Gamma_-^{\circ}(T)$ represents the interior edge with $\bU\cdot \bn<0$ and $u^\pm = \lim\limits_{\epsilon \rightarrow 0^+} u(\bm{x} \pm \epsilon \bU)$. 
The following lemma basically establishes the equivalence relationship between the BMS-DG method and Lesaint-Raviart DG method.
\begin{lemma}\label{lrbms}
It holds true that 
\begin{equation}
\begin{aligned}
&\sum_{T \in \mathcal{T}_h} \int_{\partial T} c_h w_h (\bU\cdot \bm{n})\, ds - \sum_{T \in \mathcal{T}_h} \int_{\Gamma_-^{\circ}(T)} (c_h^+ - c_h^-) w_h^+ (\bU\cdot \bm{n}^+) \, ds\\
&-\int_{\Gamma_-} c_h\bU\cdot\bn w_h \, ds
= \sum_{e \in \mathcal{E}_h^o} \int_e \av{c_h} \cdot \jump{w_h} \,ds + \sum_{e \in \mathcal{E}_h^o}\int_e \frac{|\bU\cdot \bm{n}|}{2} \jump{c_h} \cdot \jump{w_h} \, ds. 
\end{aligned}
\end{equation}
\end{lemma}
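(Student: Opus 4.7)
My plan is to verify the identity by reducing both sides to integrals over individual interior edges and comparing integrands edge by edge. The essential input is the standing assumption that $\bU \cdot \bn_e$ does not change sign along any $e \in \mathcal{E}_h$, together with the fact that the flux $\bU$ defined in \eqref{fluxeq} is single-valued on every edge; these together imply that each interior edge admits an unambiguous downstream triangle $T^+$ (with $\bU\cdot\bn^+ \le 0$) and upstream triangle $T^-$, and that $\av{\bU c_h} = \bU \av{c_h}$.

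First I would split $\sum_T \int_{\partial T} c_h w_h (\bU\cdot\bn)\,ds$ into interior- and boundary-edge contributions. Using $\bn^- = -\bn^+$, the two element contributions on each interior edge $e$ combine into $\int_e (c_h^+ w_h^+ - c_h^- w_h^-)(\bU\cdot\bn^+)\,ds$. Because only $T^+$ has $e \subset \Gamma_-^\circ(T^+)$, the second LHS term contributes only $\int_e (c_h^+ - c_h^-) w_h^+ (\bU\cdot\bn^+)\,ds$, and subtracting the two collapses the $c_h^+ w_h^+$ pieces and leaves the upwind integrand $\int_e c_h^-(w_h^+ - w_h^-)(\bU\cdot\bn^+)\,ds$. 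On the boundary, the third LHS term $-\int_{\Gamma_-} c_h (\bU\cdot\bn) w_h\,ds$ exactly cancels the inflow $\Gamma_-$ piece of the first term; the residual outflow $\Gamma_+$ piece is recovered on the RHS by extending the jump/average conventions to boundary edges via $\jump{w_h} = w_h \bn$ and $\av{\bU c_h} = \bU c_h$.

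Next I would expand the RHS on each interior edge using $\jump{w_h} = (w_h^+ - w_h^-)\bn^+$, $\jump{c_h} = (c_h^+ - c_h^-)\bn^+$, $\av{\bU c_h} = \bU \av{c_h}$, and $|\bU\cdot\bn| = -\bU\cdot\bn^+$, giving
\begin{equation*}
\tfrac{1}{2}(\bU\cdot\bn^+)\bigl[(c_h^+ + c_h^-) - (c_h^+ - c_h^-)\bigr](w_h^+ - w_h^-) = (\bU\cdot\bn^+)\, c_h^- (w_h^+ - w_h^-),
\end{equation*}
which is exactly the LHS integrand. Summing over interior edges then closes the identity. I do not anticipate any real obstacle; the argument is pure orientation and sign bookkeeping, and its heart is the elementary upwind identity $\tfrac{1}{2}(\bU\cdot\bn^+)(a+b) + \tfrac{1}{2}|\bU\cdot\bn^+|(a-b) = (\bU\cdot\bn^+)\,b$ when $\bU\cdot\bn^+ \le 0$, i.e.\ that the combination of average flux and jump-stabilization selects the upwind value $c_h^-$.
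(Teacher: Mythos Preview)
Your proposal is correct and follows essentially the same route as the paper. The paper begins by invoking the standard Arnold--Brezzi--Cockburn--Marini identity $\sum_T \int_{\partial T}(\bU\cdot\bn)c_h w_h = \sum_e \int_e \av{\bU c_h}\jump{w_h} + \sum_{e\in\mathcal{E}_h^o}\int_e \jump{\bU c_h}\av{w_h}$, handles the $\Gamma_-$ cancellation, and then verifies on each interior edge that $\jump{\bU c_h}\av{w_h} - (c_h^+ - c_h^-)w_h^+\,\bU\cdot\bn^+ = \tfrac{|\bU\cdot\bn|}{2}\jump{c_h}\cdot\jump{w_h}$; you reach the same edgewise upwind identity but assemble the element-boundary contributions by hand rather than citing the decomposition formula, which is a purely cosmetic difference.
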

\begin{proof}
It is well known that \cite{arnold2002unified}
\begin{equation}
\sum_{T \in \mathcal{T}_h} \int_{\partial T} (\bU \cdot \bm{n})c_h w_h \, ds = \sum_{e \in \mathcal{E}_h} \int_e \av{\bU c_h}\jump{w_h} \, ds + \sum_{e \in \mathcal{E}_h^o} \int_e \jump{\bU c_h} \av{w_h}\, ds. 
\end{equation}
Thus, we have that 
\begin{equation*}
\begin{aligned}
&\sum_{T \in \mathcal{T}_h} \int_{\partial T} c_h w_h (\bU\cdot \bm{n})\, ds - \sum_{T \in \mathcal{T}_h} \int_{\Gamma_-^{\circ}(T)} (c_h^+ - c_h^-) w_h^+ (\bU\cdot \bm{n}^+) \, ds -\int_{\Gamma_-} c_h\bU\cdot\bn w_h \, ds \\
&= 
\sum_{e \in \mathcal{E}_h} \int_e \av{\bU c_h}\jump{w_h} \, ds + \sum_{e \in \mathcal{E}_h^o} \int_e \jump{\bU c_h} \av{w_h}\, ds-\int_{\Gamma_-} c_h\bU\cdot\bn w_h \, ds\\
&\quad- \sum_{T \in \mathcal{T}_h} \int_{\Gamma_-^{\circ}(T)} (c_h^+ - c_h^-) w_h^+ (\bU\cdot \bm{n}^+) \, ds. 
\end{aligned}
\end{equation*}
Since $\jump{\phi} = \phi \bm{n}$ and $\av{\bm{\tau}} = \bm{\tau}$ on $e \in \mathcal{E}_h^\partial$, we see that 
\begin{equation}
\sum_{e \in \mathcal{E}_h}  \int_e\av{\bU c_h} \jump{w_h} \,ds = \sum_{e \notin \Gamma_-} \int_e\av{\bU c_h} \jump{w_h} \,ds + \sum_{e \in \Gamma_-}\int_e (\bU \cdot \bm{n} ) gw_h \, ds. 
\end{equation}
Thus, we only need to show that 
\begin{equation}
\begin{aligned}
&\sum_{e \in \mathcal{E}_h^o} \int_e \jump{\bU c_h} \av{w_h} \, ds - \sum_{T \in \mathcal{T}_h} \int_{\Gamma_-^{\circ}(T)} (c_h^+ - c_h^-) w_h^+ (\bU\cdot\bm{n}^+) \, ds \\
& = \sum_{e \in \mathcal{E}_h^o} \int_e  \frac{|\bU\cdot\bm{n}|}{2} \jump{c_h}\cdot\jump{w_h}\, ds. 
\end{aligned}
\end{equation}
For any given $e \in \mathcal{E}_h^o$, we see that
\begin{eqnarray*}
&& \quad\jump{\bU c_h}\av{w_h} - (c_h^+ - c_h^-)w_h^+ \bU \cdot \bn^+ \\
&& =( \bU c_h^+ \cdot \bn^+ + \bU c_h^- \cdot \bn^- ) \frac{w_h^+ + w_h^-}{2} 
 - (c_h^+ - c_h^-) w_h^+ \bU \cdot \bm{n}^+ \\
&&  = -\frac{1}{2}  \bU c_h^+w_h^+ \bm{n}^+ + \frac{1}{2} \bU c_h^+w_h^- \bm{n}^+ -\frac{1}{2}  \bU c_h^-w_h^+ \bm{n}^- + \frac{1}{2} \bU c_h^-w_h^- \bm{n}^- \\
&&  = \frac{|\bU\cdot\bm{n}|}{2} \jump{c_h} \cdot \jump{w_h}. 
\end{eqnarray*}
This completes the proof. 
\end{proof}
By using the integral by parts for the first term in \eqref{lrform}, we can easily find that the Lemma \eqref{lrbms} shows the equivalence between the BMS-DG scheme \cite{brezzi2004discontinuous} and Lesaint-Raviart DG scheme \cite{LR1974}. Due to the equivalence relationship, we can conclude that $C = c_h$. 

\subsection{The relation between the Lesaint-Raviart DG method and the Characteristic method}

In this section, we shall investigate the relationship between the characteristic methods and the Lesaint-Raviart DG, to solve the nondivergence form of equation \eqref{eq: euler_nondivergence}, or \eqref{hyper}. This is crucial to prove the positivity and maximum principle of the BMS DG formulation. We first introduce an instrumental lemma.
\begin{lemma}
Assume that the flux satisfies the local conservation of degree $k\geq 2k_c$. Then the system \eqref{lrform} reduces to the following: 
\begin{equation}
B(C, w_h) = F(w_h), \quad \forall w_h \in V_{h,k_c},  
\end{equation}
where 
\begin{subeqnarray*}
B(C, w_h) &:=& \sum_{T \in \mathcal{T}_h} \int_T (\bU \cdot \nabla C + f_{_+} C + \gamma C) w_h \, dx \\
&& \,\, + \sum_{e \in \mathcal{E}_h^o} \int_e \jump{C}\bn^+ w_h^+ |\bU \cdot \bn|\, ds + \sum_{e \in \Gamma_-} \int_e C  |\bU \cdot \bn| w_h\, ds \nonumber \\
F(w_h) &:=& \int_\Omega  f_{_+} \widetilde{c} w_h \, dx + \sum_{e \in \Gamma_-} \int_e c_{\rm I} |\bU \cdot \bn| w_h\, ds + \int_\Omega g w_h \, ds, 
\end{subeqnarray*}
where $f_{_+}$ is the source and $g = \gamma C^{\rm old}$ with $\gamma = \frac{1}{\Delta t}$. 
\end{lemma}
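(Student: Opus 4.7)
The plan is to start from the Lesaint--Raviart system \eqref{lrform} and rewrite the volume term $\int_T (\nabla\cdot\bU)c_h w_h\,dx$ using Definition \ref{def}, then absorb the resulting $f$ against the source $fc^*$ on the right-hand side. The entire reduction hinges on a single polynomial-degree observation followed by a routine rewriting of the upwind face contribution.

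First I would observe that for $c_h,w_h\in V_{h,k_c}$, the product $c_h w_h$ restricted to any $T\in\mathcal{T}_h$ lies in $\mathbb{P}_{2k_c}(T)$. Since $\bU$ is locally conservative of degree $k\ge 2k_c$, Definition \ref{def} yields
$$\int_T (\nabla\cdot\bU)\,c_h w_h\,dx \;=\; \int_T f\,c_h w_h\,dx, \qquad \forall\,T\in\mathcal{T}_h.$$
Writing $f = f_{_+} + f_{_-}$ and using the convention $c^*=\widetilde{c}$ on $\{f>0\}$ and $c^*=c_h$ on $\{f<0\}$, the source term on the right of \eqref{lrform} becomes $\int_T (f_{_+}\widetilde{c} + f_{_-}c_h + g)w_h\,dx$. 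Substituting the above identity into the LHS and cancelling the $\int_\Omega f_{_-}c_h w_h\,dx$ contributions that now appear on both sides leaves exactly $\int_T(\bU\cdot\nabla c_h + f_{_+} c_h + \gamma c_h)w_h\,dx$ on the left and $\int_\Omega(f_{_+}\widetilde{c} + g)w_h\,dx$ on the right, which is what is needed for $B(C,w_h)$ and $F(w_h)$ once we identify $C=c_h$ by Lemma \ref{lrbms}.

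Next I would convert the upwind jump term. On an edge $e\in\Gamma_-^\circ(T)$ the element $T$ is downstream (so $T=T^+$ in the BMS notation and $\bU\cdot\bn^+<0$), which gives $-(c_h^+-c_h^-)w_h^+(\bU\cdot\bn^+) = (c_h^+-c_h^-)w_h^+|\bU\cdot\bn|$. Using $c_h^+-c_h^-=\jump{c_h}\cdot\bn^+$ and noting that each interior edge is visited exactly once in the double sum (as the face of its downstream element),
$$-\sum_{T\in\mathcal{T}_h}\int_{\Gamma_-^\circ(T)}(c_h^+-c_h^-)w_h^+(\bU\cdot\bn^+)\,ds \;=\; \sum_{e\in\mathcal{E}_h^o}\int_e \jump{c_h}\cdot\bn^+\,w_h^+\,|\bU\cdot\bn|\,ds,$$
which is the interior face term in $B(C,w_h)$. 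The inflow boundary contributions $\sum_{e\in\Gamma_-}\int_e c_h|\bU\cdot\bn|w_h\,ds$ and $\sum_{e\in\Gamma_-}\int_e c_{\rm I}|\bU\cdot\bn|w_h\,ds$ already appear in the desired forms and transfer directly into $B$ and $F$, respectively.

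The only non-routine step is verifying the polynomial-degree condition needed to invoke Definition \ref{def}, namely that $c_h w_h\in\mathbb{P}_{2k_c}(T)$ for test and trial functions in $V_{h,k_c}$; this is immediate on simplices with the convention fixed earlier in the paper, and the remaining work is strictly algebraic bookkeeping of signs and of which element contributes each interior face.
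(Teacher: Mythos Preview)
Your proposal is correct and follows exactly the paper's (very terse) approach: invoke the local conservation of degree $k\ge 2k_c$ on the product $c_h w_h\in\mathbb{P}_{2k_c}(T)$ to replace $\nabla\cdot\bU$ by $f$, then use $c^*=c_h$ on the sink region to cancel, with the face-term rewriting you supply being precisely the bookkeeping the paper leaves implicit. One minor slip: with the paper's convention $f_{_-}=\max(-f,0)\ge 0$, the decomposition is $f=f_{_+}-f_{_-}$ rather than $f_{_+}+f_{_-}$, though your argument is internally consistent under the alternative sign convention and unaffected in substance.
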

\begin{proof} 
Since $\bU$ is locally conservative with the degree $k\geq 2k_c$, we get
\begin{equation}
\int_\Omega \nabla \cdot \bU C w_h \, dx = \int_\Omega f C w_h \, dx, \quad \forall C, w_h \in V_{h,k_c}. 
\end{equation}
By using the fact that $c^*=c$ for $f<0$, we can conclude that $B(C,w_h)=F(w_h)$. 
\end{proof}
The associated norm is given as follows:
\begin{equation}
\|w\|_h^2 = \|w\|_{0,\Omega}^2 + \sum_{e \in \mathcal{E}_h^o} \int_e \jump{w}^2 |\bU \cdot \bn| \, ds + \sum_{e \in \Gamma_-} \int_{e} w^2 |\bn \cdot \bU| \, ds, \quad \forall w \in V_{h,k_c}.  
\end{equation}
We note that the bilinear form $B$ satisfies the coercivity with respect to the above energy norm, i.e., $\|\cdot\|_h$ \cite{LR1974}. The BMS DG can also be shown to be stable with respect to $\|\cdot\|_h$ \cite{brezzi2004discontinuous}. 

We shall now consider the Characteristic method following \cite{baranger1997existence,bermudez1987methode}. We introduce a parameter $\eta$, i.e., the pseudo time step size and denote $S(x,t,\tau)$ the solution of differential system of characteristics:
\begin{equation}\label{chara1}
\frac{dS}{d\tau} = \bU(S) \quad \mbox{ subject to } S(x,t,t) = x. 
\end{equation}
We note that since $\bU \in W^{1,\infty}$, the Cauchy-Lipschitz theorem \cite{solomon1977differential} leads to the unique existence of the solution. We shall denote the characteristic feet of $x$ by $X^\eta(x)$, i.e., 
\begin{equation} 
X^\eta(x) = S(x,t,t-\eta), \quad \forall \eta \in \Reals{}.  
\end{equation}
Note that $X^\eta(x)$ is the position of $x$ at the previous time following the velocity $\bU$ for $\eta > 0$ i.e., the upstream position of $x$ for $\eta > 0$.   
Under the local conservation of degree $k = 2k_c$, the variational formulation of characteristic method is given by finding $c^\eta \in L^2(\Omega)$ such that 
\begin{equation}\label{chara} 
B_\eta(c^\eta,w) = F_\eta(w), \quad \forall w \in L^2(\Omega), 
\end{equation} 
where 
\begin{subeqnarray*}
B_\eta(c,w) &:=& \frac{1}{\eta} \int_{\Omega} c w \,dx - \frac{1}{\eta} \int_{\Omega_1} c(X^\eta(x)) w \, dx + \int_\Omega (f_{_+}c + \gamma c) w \, dx \\
F_\eta(w) &:=& \int_\Omega f_{_+} \widetilde{c} w \, dx + \frac{1}{\eta} \int_{\Omega_2} c_{\rm I} w \, dx + \int_\Omega g w\,dx ,  
\end{subeqnarray*}
with $\Omega_1 \cup \Omega_2 = \Omega$ being such that 
\begin{subeqnarray*}
\Omega_{1} = \{x \in \Omega : X^\eta(x) \in \Omega \} \quad \mbox{ and } \quad  \Omega_{2} = \{x \in \Omega : X^\eta(x) \notin \Omega \}. 
\end{subeqnarray*}

We now consider to solve the equation \eqref{chara} using DG with the approximation space given by $V_{h,k_c}$. The discrete problem can then be read as to find $C^\eta \in V_{h,k_c}$ such that 
\begin{eqnarray}\label{cDG} 
B_\eta(C^\eta,w_h) = F_\eta(w_h), \quad \forall w_h \in V_{h,k_c}.  
\end{eqnarray}
To investigate the well-posedness of the equation \eqref{cDG}, the special energy norm denoted by $\|\cdot\|_{h,\eta}$ has been introduced in \cite{baranger1999natural}, i.e., 
\begin{equation}
\begin{aligned}
&\|w\|_{h,\eta}^2 := \int_{\Omega} w^2 \, dx + \frac{1}{\eta} \int_{\Omega_2} w^2 (x) \, dx + \frac{1}{\eta} \int_{\Omega_1} \left( w (x) - w (X^\eta (x)) \right)^2 dx \\
& \quad\quad\qquad+\int_{\Omega_3} \frac{w^2 (X^\eta (x))}{\eta} \, dx,
\quad \forall w \in V_{h,k_c}. 
\end{aligned}
\end{equation}
where $\Omega_3 = X^{-\eta}(\Omega)\backslash \Omega$. The bilinear form $B_\eta$ is shown to be coercive with respect to the energy norm $\|\cdot\|_{h,\eta}$ and thus, the well-posedness was established in \cite{baranger1999natural}. Most importantly, the following results were established in \cite{baranger1999natural}
\begin{theorem}\label{l2stability}
Assume that $\bU \in W^{1,\infty}(\Omega)$. Then 
\begin{equation}
\begin{aligned}
\lim_{\eta \rightarrow 0} B_\eta(u,v) &= B(u,v), \quad \forall u, v \in V_{h,k_c},    \\
\lim_{\eta \rightarrow 0} F_\eta(v) &= F(v), \quad\forall v \in V_{h,k_c},   
\end{aligned}
\end{equation}
and 
\begin{equation}
\lim_{\eta \rightarrow 0} \|u\|_{h,\eta} = \|u\|_h, \quad \forall u \in V_{h,k_c}. 
\end{equation}
\end{theorem}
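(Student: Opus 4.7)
The approach is a change-of-variables argument along characteristics combined with a tubular-neighborhood analysis near element boundaries and the inflow part $\Gamma_-$ of $\partial\Omega$. The standing hypothesis $\bU\in W^{1,\infty}(\Omega)$ guarantees that the characteristic map $X^\eta$ is bi-Lipschitz with Jacobian $\det(DX^\eta)=1+O(\eta)$, while every $u\in V_{h,k_c}$ is polynomial on each $T\in\mathcal{T}_h$ and hence smooth along any characteristic that stays inside a single element. The sign assumption on $\bU\cdot\bn_e$ stated at the beginning of Section~\ref{BMS} is also used implicitly to ensure that, for $\eta$ small enough, each short characteristic crosses any given edge at most once.

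First I would attack $B_\eta(u,v)\to B(u,v)$. The reaction and mass terms $\int_\Omega(f_{_+}u+\gamma u)v\,dx$ appear verbatim in both bilinear forms, so the task reduces to proving that
\begin{equation*}
\frac{1}{\eta}\int_{\Omega_1}\bigl(u(x)-u(X^\eta(x))\bigr)v(x)\,dx + \frac{1}{\eta}\int_{\Omega_2}u(x)v(x)\,dx
\end{equation*}
converges to the transport and inflow parts of $B(u,v)$. I would split $\Omega_1$ into the subset $\Omega_1^{\rm int}$ on which $X^\eta(x)$ lies in the same element as $x$, and its complement $\Omega_1^{\rm cross}$. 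On $\Omega_1^{\rm int}$ the polynomial $u$ is smooth along the trajectory, so
\begin{equation*}
u(x)-u(X^\eta(x))=\int_0^\eta\bU\cdot\nabla u\bigl(S(x,t,t-\tau)\bigr)\,d\tau=\eta\,\bU(x)\cdot\nabla u(x)+O(\eta^2),
\end{equation*}
and since $|\Omega_1^{\rm int}|\to|\Omega|$, dividing by $\eta$ and passing to the limit recovers $\sum_T\int_T\bU\cdot\nabla u\,v\,dx$. The set $\Omega_1^{\rm cross}$ is a tubular layer of width $O(\eta)$ on the downstream side of each interior edge; I would use the local change of variables $(y,\tau)\mapsto x$, where $y\in e$ and $\tau\in(0,\eta)$ is the characteristic time from $y$ moving downstream, whose Jacobian contributes $|\bU\cdot\bn|\,d\tau\,ds$. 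Since $u(x)\to u^+(y)$ and $u(X^\eta(x))\to u^-(y)$ on each sliver, this part yields $\sum_{e\in\mathcal{E}_h^o}\int_e(u^+-u^-)v^+|\bU\cdot\bn|\,ds$, which equals $\sum_{e\in\mathcal{E}_h^o}\int_e\jump{u}\cdot\bn^+\,v^+|\bU\cdot\bn|\,ds$. Applying the same parametrization to $\Omega_2$, which is a layer of width $O(\eta)$ inside $\Omega$ adjacent to $\Gamma_-$, converts $\frac{1}{\eta}\int_{\Omega_2}uv\,dx$ into $\sum_{e\in\Gamma_-}\int_e u\,v\,|\bU\cdot\bn|\,ds$. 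Summing everything reproduces $B(u,v)$.

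The convergence $F_\eta(v)\to F(v)$ follows by applying the same $\Omega_2$-layer change of variables to $\frac{1}{\eta}\int_{\Omega_2}c_{\rm I}v\,dx$, using continuity of $c_{\rm I}$ on $\Gamma_-$. For the norm, I would decompose the squared-difference integrand in the same way: on $\Omega_1^{\rm int}$ it is $O(\eta^2)$, so its rescaled integral is $O(\eta)\to 0$; on $\Omega_1^{\rm cross}$ the change of variables produces $\sum_{e\in\mathcal{E}_h^o}\int_e\jump{u}^2|\bU\cdot\bn|\,ds$; the $\Omega_2$ piece contributes $\sum_{e\in\Gamma_-}\int_e u^2|\bU\cdot\bn|\,ds$; and the $\Omega_3$ piece vanishes under the natural extension $u\equiv 0$ outside $\Omega$, since its integrand is then identically zero. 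Adding $\|u\|_{0,\Omega}^2$ reconstructs $\|u\|_h^2$.

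The main technical obstacle is the change-of-variables estimate on the edge-crossing sliver $\Omega_1^{\rm cross}$: one must verify that for sufficiently small $\eta$ the map $X^\eta$ restricted to each sliver is a diffeomorphism onto its image, that the induced Jacobian converges uniformly to $|\bU\cdot\bn|$ times surface measure, and that the polynomial traces $u^\pm$ on $e$ are approximated to within $O(\eta)$ by the values of $u$ at distance $O(\eta)$ from $e$ inside each adjacent element. The Lipschitz regularity of $\bU$ yields the diffeomorphism and Jacobian estimates, while the polynomial nature of $u|_T$ controls the trace approximation; once these two ingredients are in place, the remaining computations are routine.
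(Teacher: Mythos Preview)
The paper does not prove this theorem; it simply records the statement and attributes it to Baranger and Machmoum \cite{baranger1999natural}. Your sketch is therefore not competing against anything in the paper itself, and the change-of-variables/tubular-layer strategy you outline for $B_\eta\to B$ and $F_\eta\to F$ is exactly the mechanism behind the cited result, so that part of the plan is sound.

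There is, however, a real slip in your treatment of the $\Omega_3$ term in $\|u\|_{h,\eta}^2$. By definition $\Omega_3 = X^{-\eta}(\Omega)\setminus\Omega$, so for every $x\in\Omega_3$ the backward foot $X^\eta(x)$ lies \emph{inside} $\Omega$; the integrand $u^2(X^\eta(x))$ therefore samples $u$ at interior points near the outflow boundary, and extending $u$ by zero outside $\Omega$ has no effect on it. The correct computation is the same layer argument you already used for $\Omega_2$: the substitution $y=X^\eta(x)$ carries $\Omega_3$ onto the $O(\eta)$-thick strip inside $\Omega$ adjacent to the outflow boundary, with Jacobian $1+O(\eta)$, giving
\[
\frac{1}{\eta}\int_{\Omega_3} u^2\bigl(X^\eta(x)\bigr)\,dx \;\longrightarrow\; \int_{\partial\Omega\setminus\Gamma_-} u^2\,|\bU\cdot\bn|\,ds .
\]
This outflow contribution is absent from the $\|\cdot\|_h$ written in the paper, so either the limiting norm stated there omits a term or the intended $\|\cdot\|_{h,\eta}$ differs from what is printed; in either case your assertion that the $\Omega_3$ integral is identically zero is incorrect and must be replaced by the layer computation above.
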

The natural consequence of this theorem is that $C_\eta$ converges to $C$ as $\eta \rightarrow 0$. Namely, we have
\begin{equation}\label{main:limit} 
\lim_{\eta \rightarrow 0} C^\eta = C. 
\end{equation}

We shall use this result to prove $L^2$ norm stability, positivity and the maximum principle. For the $L^2$ norm stability, we first introduce the weighted $L^2$ norm, which is defined by 
\begin{equation}
\|u\|_{L^2_\eta}=\left(\int_\Omega \left(1+\eta f_{_+}+\eta\gamma\right)u^2\,dx\right)^{\frac{1}{2}}.
\end{equation}

\begin{theorem}
Assume that the flux satisfies the local conservation of degree $k\geq 2k_c$ and $M=\max\{|c^0|, |c_{\rm I}|,|\widetilde{c}|\}$, then the solution $C^\eta$ to \eqref{chara} satisfies the $L^2_\eta$ stability:
\begin{equation}
\|C^\eta\|_{L^2_\eta}\leq \|M\|_{L^2_\eta}. 
\end{equation}
\end{theorem}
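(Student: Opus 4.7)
The strategy is to establish the stronger pointwise maximum-principle bound $|C^\eta(x)| \le M$ almost everywhere, from which the weighted $L^2_\eta$ estimate follows immediately by squaring and integrating against the weight $1 + \eta f_{_+} + \eta\gamma$. Mirroring the continuous maximum-principle argument from Theorem~\ref{thm_wmp}, I would test \eqref{chara} with $w = (C^\eta - M)_{_+} \in L^2(\Omega)$ for the upper bound, and then symmetrically with $w = -(C^\eta + M)_{_-}$ for the lower bound. An inductive hypothesis $|C^{\rm old}| \le M$ at the previous Euler time level (with base case $|c^0|\le M$) enters through $g = \gamma C^{\rm old}$; it closes the induction because $|C^\eta|\le M$ yields $|C^{\rm old}|\le M$ at the next step.

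First I would subtract $B_\eta(M, (C^\eta - M)_{_+})$ from both sides of the tested equation. Since $M$ is constant, $M(X^\eta(x))=M$, and since $\Omega=\Omega_1\cup\Omega_2$, the formula for $B_\eta(M,\cdot)$ collapses to the $\Omega_2$ and $(f_{_+}+\gamma)$ contributions, producing
\begin{equation*}
B_\eta(C^\eta - M,(C^\eta-M)_{_+}) = \int_\Omega f_{_+}(\widetilde{c}-M)(C^\eta-M)_{_+}\,dx + \tfrac{1}{\eta}\int_{\Omega_2}(c_{\rm I}-M)(C^\eta-M)_{_+}\,dx + \gamma\int_\Omega (C^{\rm old}-M)(C^\eta-M)_{_+}\,dx.
\end{equation*}
Each of the three terms on the right is non-positive: $f_{_+}\ge 0$ with $\widetilde{c}\le M$, $c_{\rm I}\le M$, and $C^{\rm old}\le M$ by the inductive hypothesis.

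Second, I would decompose $C^\eta-M = (C^\eta-M)_{_+} - (C^\eta-M)_{_-}$ and use bilinearity to write $B_\eta((C^\eta-M)_{_+},(C^\eta-M)_{_+}) = B_\eta(C^\eta-M,(C^\eta-M)_{_+}) + B_\eta((C^\eta-M)_{_-},(C^\eta-M)_{_+})$. In the last bracket, because $(C^\eta-M)_{_+}(C^\eta-M)_{_-}\equiv 0$ pointwise, both the diagonal $\tfrac{1}{\eta}\int_\Omega$ term and the $(f_{_+}+\gamma)$ term vanish, leaving only the non-local piece $-\tfrac{1}{\eta}\int_{\Omega_1}(C^\eta-M)_{_-}(X^\eta(x))(C^\eta-M)_{_+}(x)\,dx$, which is $\le 0$ since both factors are non-negative. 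Combining with the first step gives $B_\eta((C^\eta-M)_{_+},(C^\eta-M)_{_+}) \le 0$. Invoking the $L^2$-coercivity of $B_\eta$ established in \cite{baranger1999natural} (which dominates the $\|\cdot\|_{h,\eta}^2$-norm and hence the $L^2$-norm by a strictly positive multiple involving $\gamma=1/\Delta t$), I conclude $(C^\eta-M)_{_+}\equiv 0$, i.e.\ $C^\eta\le M$ a.e. The symmetric choice $w=-(C^\eta+M)_{_-}$ yields $C^\eta\ge -M$, so $|C^\eta|\le M$ pointwise; integrating $(C^\eta)^2\le M^2$ against $1+\eta f_{_+}+\eta\gamma$ delivers $\|C^\eta\|_{L^2_\eta}\le \|M\|_{L^2_\eta}$.

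The main obstacle is the proper handling of the non-local composition $(C^\eta-M)_{_-}\circ X^\eta$ that appears in the cross-term: one needs $X^\eta$ to be a well-defined measurable map on $\Omega_1$ with Jacobian bounds so that this composition remains in $L^2(\Omega_1)$ and so that the sign analysis above is rigorous. Both are guaranteed under $\bU \in W^{1,\infty}(\Omega)$ via the Cauchy--Lipschitz theorem already invoked before Theorem~\ref{l2stability}. A secondary care is that the argument uses the local conservation of degree $k\ge 2k_c$ only indirectly, through its role in writing \eqref{chara} with the $(f_{_+}+\gamma)$ zeroth-order coefficient that drives the sign-based comparison with the constant barrier $M$.
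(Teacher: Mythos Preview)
Your argument is correct for the continuous problem \eqref{chara} exactly as the theorem is stated, and in fact it yields the stronger pointwise bound $|C^\eta|\le M$ a.e., from which the weighted $L^2_\eta$ inequality is immediate. The paper, however, proceeds quite differently: it introduces a fixed-point (Picard) iteration $C^{\eta,\ell}\to C^{\eta,\ell+1}$ in which the nonlocal composition $C^{\eta,\ell}\circ X^\eta$ is frozen at the previous iterate, establishes contraction in $L^2$, and then tests the iterate equation with $w_h=C^{\eta,\ell+1}$ and applies Cauchy--Schwarz/Young to obtain $\|C^{\eta,\ell+1}\|_{L^2_\eta}\le\|M\|_{L^2_\eta}$ directly, without ever invoking a comparison with the barrier $M$ or the positive-part test function.

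The trade-off is worth noting. Your barrier/positive-part argument is shorter and delivers a sharper conclusion, but it hinges on the admissibility of $w=(C^\eta-M)_{_+}$ as a test function. That is fine in $L^2(\Omega)$, i.e.\ for \eqref{chara}, and also for the discrete problem \eqref{cDG} when $k_c=0$; but for $k_c\ge 1$ the function $(C^\eta-M)_{_+}$ is not a piecewise polynomial, so it is not an admissible $w_h$ in \eqref{cDG}. The paper's downstream use of this theorem (passing $\eta\to 0$ via \eqref{main:limit} to reach the BMS-DG solution $C$) actually requires the bound for the \emph{discrete} $C^\eta\in V_{h,k_c}$, which is why the paper's energy-estimate route---testing only with $w_h=C^{\eta,\ell+1}\in V_{h,k_c}$---is the more robust choice here despite giving the weaker $L^2_\eta$ conclusion. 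If you want your argument to serve the same purpose in the paper's pipeline for general $k_c$, you would need a different test function that stays in $V_{h,k_c}$.
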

\begin{proof}
We shall first invoke the fixed point iteration to obtain $c^\eta_h$. Set $0<c^{\eta,0}_h = C^{\rm old}<M$ and consider the following iterates $\{C^{\eta,\ell}\}_{\ell = 0,1,\cdots}$ defined by the following rule: 
\begin{equation}\label{iteration}
\begin{aligned}
& \frac{1}{\eta} \int_{\Omega} C^{\eta,\ell+1} w_h \,dx - \frac{1}{\eta} \int_{\Omega_1} C^{\eta,\ell}(X^\eta(x)) w_h \, dx + \int_\Omega (f_{_+} + \gamma) C^{\eta,\ell+1} w_h \, dx \\
& = \int_\Omega f_{_+} \widetilde{c} w_h \, dx + \frac{1}{\eta} \int_{\Omega_2} c_{\rm I} w_h \, dx + \int_\Omega g w_h\, dx.  
\end{aligned}
\end{equation}
We recall that $C^\eta$ solves the following equation:
\begin{equation}
    \begin{aligned}
& \frac{1}{\eta} \int_{\Omega} C^{\eta} w_h \,dx - \frac{1}{\eta} \int_{\Omega_1} C^{\eta}(X^\eta(x)) w_h \, dx + \int_\Omega (f_{_+} + \gamma) C^{\eta} w_h \, dx \\
&= \int_\Omega f_{_+} \widetilde{c}w_h \, dx + \frac{1}{\eta} \int_{\Omega_2} c_{\rm I} w_h \, dx + \int_\Omega g w_h \, dx.  
\end{aligned}
\end{equation}
By subtracting two equations, setting $w_h = e_h^{\ell+1} = C^{\eta,\ell+1} - C^\eta$ and multiplying $\eta$ to both sides, we get
\begin{eqnarray*}
(1 + \eta \min_{x \in \Omega} (f_{_+} + \gamma) )\int_{\Omega} |e_{h}^{\ell+1}|^2 \,dx &=& \int_{\Omega_1} e_h^{\ell}(X^\eta(x)) e_h^{\ell+1} \, dx \\ 
&\leq& \left ( \int_{\Omega_1} |e_h^\ell(X^\eta)|^2 \, dx \right )^{\frac{1}{2}} \left (\int_\Omega |e_h^{\ell+1}|^2 \,dx \right )^{\frac{1}{2}}. 
\end{eqnarray*}
Thus, we have the following inequality since $X^\eta$ is one-to-one and $\Omega_1 \subset \Omega$, 
\begin{equation} 
(1 + \eta \min_{x \in \Omega}(f_{_+} + \gamma)) \|e_{h}^{\ell+1}\|_{0,\Omega} \,dx \leq  \|e_h^\ell\|_{0,\Omega_1} \leq \|e_h^\ell\|_{0,\Omega}.  
\end{equation}
Thus, we prove the convergence of the fixed point iteration. 
Let 
\begin{equation*}
\widetilde{C}^{\eta,\ell}(x)=
\left\{
\begin{aligned}
&C^{\eta,\ell}(X^\eta(x)), && x\in\Omega_1\\ 
&c_{\rm I}, && x\in\Omega_2
\end{aligned}
\right.
\end{equation*}
Then we have 
\begin{equation}\label{iteration1}
\begin{aligned}
&\frac{1}{\eta} \int_{\Omega} C^{\eta,\ell+1} w_h \,dx  + \int_\Omega (f_{_+} + \gamma) C^{\eta,\ell+1} w_h \, dx \\
& = \int_\Omega f_{_+} \widetilde{c} w_h \, dx + \frac{1}{\eta} \int_{\Omega} \widetilde{C}^{\eta,\ell} w_h \, dx + \int_\Omega g w_h\, dx.  
\end{aligned}
\end{equation}
Take $w_h=C^{\eta,\ell+1}$ in the above equation, we get
\begin{equation}\label{eq: w=c}
\begin{aligned}
&\quad\frac{1}{\eta} \int_{\Omega} (C^{\eta,\ell+1})^2 \,dx  + \int_\Omega (f_{_+} + \gamma) (C^{\eta,\ell+1})^2 \, dx \\
& = \int_\Omega f_{_+} \widetilde{c} C^{\eta,\ell+1} \, dx + \frac{1}{\eta} \int_{\Omega} \widetilde{c}_h^{\eta,\ell} C^{\eta,\ell+1} \, dx + \int_\Omega g C^{\eta,\ell+1}\, dx\\
& \leq \frac{1}{2}\int_\Omega f_{_+}|\widetilde{c}|^2\, dx +\frac{1}{2}\int_\Omega f_{_+}|C^{\eta,\ell+1}|^2\, dx + \frac{1}{2\eta}\int_\Omega |\widetilde{C}^{\eta,\ell}|^2\, dx + \frac{1}{2\eta}
\int_\Omega |C^{\eta,\ell+1}|^2\, dx\\ 
&\quad+ \frac{1}{2}\int_\Omega \gamma |C^{old}|^2\,dx + \frac{1}{2}\int_\Omega \gamma|C^{\eta,\ell+1}|^2\, dx 
\end{aligned}
\end{equation}
by using the Cauchy-Schwarz inequality and the Young's equation. Therefore, 
\begin{equation}
\begin{aligned}
    &\quad \int_\Omega \left(1+\eta f_{_+}+\eta\gamma\right) |C^{\eta,\ell+1}|^2\, dx\\
    &\leq \int_\Omega \eta f_{_+}|\widetilde{c}|^2\, dx +\int_\Omega |\widetilde{C}^{\eta,\ell}|^2\, dx+\int_\Omega \eta\gamma |C^{old}|^2\,dx\\
    &\leq \int_\Omega \left(1+\eta f_{_+}+\eta\gamma\right)M^2\,dx.
    \end{aligned}
\end{equation}
Thus, $\|C^{\eta,\ell+1}\|_{L^2_\eta}\leq \|M\|_{L^2_\eta}$.
By the convergence of this iterative method, we get the result $\|C^{\eta}\|_{L^2_\eta}\leq \|M\|_{L^2_\eta}$.   
\end{proof}
\begin{theorem}
Assume that the flux satisfies the local conservation of degree $k\geq 2k_c$, and suppose that the problem \eqref{hyper} has the nonnegative inflow $c_{\rm I}$ and initial $c^0$ condition, $M=\max\{|c^0|, |c_{\rm I}|, |\widetilde{c}|\}$. Then the numerical solution $C$ obtained by BMS-DG satisfies the $L^2$ stability. 
\end{theorem}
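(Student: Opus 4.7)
The plan is to lift the weighted $L^2_\eta$ stability already established for the characteristic scheme \eqref{cDG} to the BMS-DG solution by a limiting argument in the pseudo time step $\eta \to 0$, using both the equivalence between the BMS-DG and Lesaint-Raviart DG formulations and the convergence statements gathered in Theorem \ref{l2stability}.

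First, I would invoke Lemma \ref{lrbms} to identify the BMS-DG solution $C$ with the Lesaint-Raviart DG solution $c_h$ of \eqref{lrform}. Under the hypothesis that $\bU$ is locally conservative of degree $k \geq 2k_c$, the instrumental lemma preceding \eqref{cDG} then lets me replace \eqref{lrform} by the reduced variational problem $B(C, w_h) = F(w_h)$ for all $w_h \in V_{h,k_c}$, with the bilinear and linear forms defined there. This places $C$ on the same footing as the limit object appearing in the characteristic formulation.

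Next, for each $\eta > 0$ I would consider the discrete characteristic problem \eqref{cDG}, whose solution $C^\eta \in V_{h,k_c}$ is well-posed by the coercivity of $B_\eta$ with respect to $\|\cdot\|_{h,\eta}$. The previous theorem gives the weighted estimate
\begin{equation*}
\|C^\eta\|_{L^2_\eta} \leq \|M\|_{L^2_\eta}, \qquad \|u\|_{L^2_\eta}^2 = \int_\Omega (1 + \eta f_{_+} + \eta \gamma) u^2 \, dx.
\end{equation*}
Because $V_{h,k_c}$ is finite dimensional for a fixed mesh, and by Theorem \ref{l2stability} the forms $B_\eta$, $F_\eta$ converge to $B$, $F$ as $\eta \to 0$, the discrete solutions satisfy the limit relation \eqref{main:limit}, i.e.\ $C^\eta \to C$ in $V_{h,k_c}$.

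Finally, I would pass to the limit $\eta \to 0$ in the weighted inequality. Since $f_{_+}$ and $\gamma = 1/\Delta t$ are bounded on $\Omega$, the weight $1 + \eta f_{_+} + \eta \gamma$ converges uniformly to $1$, so for any fixed $u \in V_{h,k_c}$ one has $\|u\|_{L^2_\eta} \to \|u\|_{L^2(\Omega)}$. Combined with $C^\eta \to C$, this yields
\begin{equation*}
\|C\|_{L^2(\Omega)} = \lim_{\eta \to 0} \|C^\eta\|_{L^2_\eta} \leq \lim_{\eta \to 0} \|M\|_{L^2_\eta} = \|M\|_{L^2(\Omega)},
\end{equation*}
which is the desired $L^2$ stability. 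The main technical obstacle is ensuring the simultaneous convergence of $C^\eta \to C$ and of the $\eta$-dependent norms, but this is controlled by the finite dimensionality of $V_{h,k_c}$ together with Theorem \ref{l2stability}; there is no analytic compactness issue to resolve beyond a uniform equivalence of norms argument on the fixed finite-dimensional space.
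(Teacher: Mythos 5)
Your proposal is correct and follows essentially the same route as the paper: it lifts the weighted $L^2_\eta$ bound on the characteristic-scheme solution $C^\eta$ to the BMS-DG solution $C$ by combining Theorem \ref{l2stability}, the limit \eqref{main:limit}, and the uniform convergence of the weight $1+\eta f_{_+}+\eta\gamma$ to $1$. The only difference is that you spell out the finite-dimensionality and norm-convergence details that the paper leaves implicit, which is a welcome clarification rather than a new argument.
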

\begin{proof}
We can easily find that $\lim\limits_{\eta\to 0}\|u\|_{L^2_\eta}=\|u\|_{L^2}$. Thus, 
by combining the Theorem \eqref{l2stability} and the equation \eqref{main:limit}, we can conclude that 
\begin{equation}
\|C\|_{L^2} \leq M|\Omega|^{\frac{1}{2}}.
\end{equation}
This completes the proof. 
\end{proof}
Next, we shall assume that $k_c = 0$ and prove the positivity and the maximum principle. 
\begin{theorem}\label{main:lem}
Assume that the flux satisfies the local conservation of degree $k\geq 2k_c=0$. The solution $C^\eta$ to \eqref{cDG} satisfies $0\leq C^\eta\leq M$, where 
\begin{equation}
M = \max \{|c^{0}|, |c_{\rm I}|,|\widetilde{c}| \}.
\end{equation}
\end{theorem}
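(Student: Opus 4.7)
The plan is to mirror the fixed-point iteration strategy developed in the preceding $L^2$-stability theorem, now exploiting that $k_c = 0$ reduces $V_{h,k_c}$ to piecewise constants so that testing against a single element's indicator function yields an explicit pointwise identity for the solution on that element. I will initialize the iteration \eqref{iteration1} at $C^{\eta,0} := C^{\mathrm{old}}$, where by induction on the time step I may assume $0 \le C^{\mathrm{old}} \le M$ (the base case reducing to the hypothesis on $c^0$). The contraction estimate already established in the preceding theorem guarantees $C^{\eta,\ell} \to C^\eta$, so it suffices to prove $0 \le C^{\eta,\ell} \le M$ at every level and pass to the limit.

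For the inductive step, assume $0 \le C^{\eta,\ell} \le M$ and take the test function $w_h = \chi_T$, the indicator of an arbitrary $T \in \mathcal{T}_h$, in \eqref{iteration1}. Since $C^{\eta,\ell+1}$ is constant on $T$, the equation collapses to the scalar identity
\begin{equation*}
\left(\frac{|T|}{\eta} + \int_T (f_{_+} + \gamma)\,dx\right) C^{\eta,\ell+1}\big|_T = \frac{1}{\eta}\int_T \widetilde{C}^{\eta,\ell}\,dx + \int_T f_{_+}\,\widetilde{c}\,dx + \int_T g\,dx.
\end{equation*}
The coefficient on the left is strictly positive. For positivity, each term on the right is non-negative: $\widetilde{C}^{\eta,\ell}$ equals either $C^{\eta,\ell}(X^\eta(\cdot)) \ge 0$ on $\Omega_1$ or $c_{\rm I} \ge 0$ on $\Omega_2$, while $f_{_+},\widetilde{c} \ge 0$ and $g = \gamma C^{\mathrm{old}} \ge 0$. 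For the upper bound, substituting $\widetilde{C}^{\eta,\ell} \le M$, $\widetilde{c} \le M$, and $g \le \gamma M$ bounds the right-hand side by $M\bigl(|T|/\eta + \int_T(f_{_+}+\gamma)\,dx\bigr)$, yielding $C^{\eta,\ell+1}\big|_T \le M$. Ranging over all $T\in\mathcal{T}_h$ and letting $\ell\to\infty$ then gives $0 \le C^\eta \le M$.

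The main subtlety, rather than a serious obstacle, lies in the treatment of escaping characteristics on $\Omega_2$: once $X^\eta(x) \notin \Omega$, the upstream value must be replaced by the inflow datum $c_{\rm I}$, and bundling this into $\widetilde{C}^{\eta,\ell}$ via \eqref{iteration1} is precisely what makes the non-negativity of the right-hand side transparent. The essential role of $k_c=0$ is that testing with $\chi_T$ extracts a pointwise value rather than an elementwise average; for $k_c \ge 1$ one would only control $\int_T C^{\eta,\ell+1}\,dx$, which is insufficient for a pointwise bound. This is fully consistent with Table~\ref{tab:results}, where the positivity and maximum principle are claimed only in the lowest-order case.
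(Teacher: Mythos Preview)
Your proposal is correct and follows essentially the same approach as the paper: both arguments run the fixed-point iteration \eqref{iteration1}, test against the indicator of a single element to exploit that $C^{\eta,\ell+1}$ is piecewise constant, obtain the lower and upper bounds on each element by sign-checking and majorizing the right-hand side, and then pass to the limit via the contraction established in the preceding theorem. Your write-up is in fact slightly more explicit than the paper's (which abbreviates the positivity step to ``obviously by induction''), and your closing remark on why $k_c=0$ is essential is a welcome addition.
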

\begin{proof} 
Consider piecewise constant function in \eqref{iteration1}, define $w_h=1$ in $T$ and $w_h=0$ in others, then we have
\begin{equation}\label{iteration2constant}
\begin{aligned}
&\frac{1}{\eta} \int_T C^{\eta,\ell+1} \,dx  + \int_T (f_{_+} + \gamma) C^{\eta,\ell+1} \, dx \\
& = \int_T f_{_+} \widetilde{c}  \, dx + \frac{1}{\eta} \int_{T} \widetilde{C}^{\eta,\ell}(x) \, dx + \int_T g\, dx.  
\end{aligned}
\end{equation}
Obviously, we can get $C^{\eta,\ell+1}\geq 0$ by induction.
Additionally, the above equation yields
\begin{equation}\label{iteration3}
\begin{aligned}
\int_T \left ( \frac{1}{\eta}+f_{_+} + \gamma \right ) C^{\eta,\ell+1}\, dx &= \int_T f_{_+} \widetilde{c}  \, dx + \frac{1}{\eta} \int_{T} \widetilde{C}^{\eta,\ell} \, dx + \int_T g\, dx  \\
&\leq \int_T  \left( \frac{1}{\eta}+f_{_+} + \gamma \right )M\,dx.
\end{aligned}
\end{equation}
Then we have, 
\begin{equation*}
\begin{aligned}
\int_T  \left( \frac{1}{\eta}+f_{_+} + \gamma \right )(C^{\eta,\ell+1}-M)\,dx\leq 0.
\end{aligned}
\end{equation*}
Thus, $C^{\eta,\ell+1}\leq M$ since $\dfrac{1}{\eta}+f_{_+} + \gamma>0$. Using the fact that the fixed point iteration converges, we can conclude that $0\leq C^\eta\leq M$.
\end{proof} 
Combined with the above theorem and \eqref{main:limit}, we can obtain the positivity and the discrete maximum principle.
\begin{theorem}
Assume that the flux satisfies the local conservation of degree $k\geq 2k_c=0$ and suppose that the problem \eqref{hyper} has the nonnegative inflow $c_{\rm I}$ and initial condition $c^0$, $M=\max\{|c^0|, |c_{\rm I},|\widetilde{c}|\}$. Then the numerical solution $C$ obtained by BMS-DG satisfies the positivity and the discrete maximum principle. 
\end{theorem}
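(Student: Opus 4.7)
The plan is to assemble the result directly from the three technical ingredients already established in the excerpt: (i) the elementwise bound $0 \le C^\eta \le M$ of Theorem~\ref{main:lem} for the characteristic scheme at pseudo time step $\eta>0$; (ii) the convergence $C^\eta \to C$ in \eqref{main:limit} as $\eta\to 0$, where $C$ is the solution of the Lesaint-Raviart DG system \eqref{lrform}; and (iii) the equivalence established in Lemma~\ref{lrbms}, which identifies the Lesaint-Raviart DG solution with the BMS-DG solution of \eqref{transport}. Since $k_c = 0$, all three objects live in the finite-dimensional space $V_{h,0}$ of piecewise constants on $\mathcal{T}_h$, so the passage to the limit reduces to a statement about finitely many real numbers.

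\textbf{Step 1.} I would first invoke Theorem~\ref{main:lem} under the hypothesis that $\bU$ is locally conservative of degree $k \geq 2k_c = 0$, which is exactly the degree assumed in the current statement. This produces, for every $\eta > 0$, a discrete characteristic-method solution $C^\eta \in V_{h,0}$ satisfying
\begin{equation*}
0 \le C^\eta(x) \le M \quad \text{for a.e. } x \in \Omega,
\end{equation*}
with $M = \max\{|c^0|, |c_{\rm I}|, |\widetilde{c}|\}$. Because $C^\eta$ is piecewise constant, this is equivalent to saying that the elementwise value $C^\eta|_T$ lies in $[0,M]$ for every $T \in \mathcal{T}_h$.

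\textbf{Step 2.} I would next apply the convergence statement \eqref{main:limit}, which is itself a consequence of Theorem~\ref{l2stability}. As $\eta \to 0^+$, one has $C^\eta \to C$, where $C \in V_{h,k_c}$ solves the Lesaint-Raviart system \eqref{lrform}. In the finite-dimensional space $V_{h,0}$ any mode of convergence (in $L^2$, in $\|\cdot\|_h$, or in $\|\cdot\|_{h,\eta}$) is equivalent to convergence of the elementwise constant values, so passing to the limit in the inequality from Step 1 yields $0 \le C|_T \le M$ on every $T \in \mathcal{T}_h$, i.e.\ $0 \le C \le M$ on $\Omega$.

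\textbf{Step 3 and main difficulty.} Finally I would invoke Lemma~\ref{lrbms}, which shows that the Lesaint-Raviart DG formulation \eqref{lrform} and the BMS-DG formulation \eqref{transport} coincide whenever $\bU\cdot\bn_e$ does not change sign on each edge; in particular their unique discrete solutions agree. Therefore the bounds $0 \le C \le M$ transfer verbatim to the BMS-DG solution, establishing both positivity and the discrete maximum principle. The main ``obstacle'' here is essentially bookkeeping rather than analysis: one must verify that the assumption $k \ge 2k_c = 0$ really does make Theorem~\ref{main:lem} applicable (so that the reduced bilinear form $B$ is the right one), and that the equivalence in Lemma~\ref{lrbms} matches the time-discretized BMS scheme with the same $\gamma$ and $g$ used to define $C^\eta$. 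Once these compatibility checks are in place, the conclusion follows immediately by limit passage in the elementwise inequality.
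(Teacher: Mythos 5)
Your proposal is correct and follows essentially the same route as the paper: the paper's own (very terse) argument is precisely to combine the elementwise bound $0\le C^\eta\le M$ from Theorem~\ref{main:lem} with the limit \eqref{main:limit} and the Lesaint-Raviart/BMS equivalence of Lemma~\ref{lrbms}. Your additional observation that convergence in the finite-dimensional space $V_{h,0}$ preserves the elementwise inequalities simply makes explicit a limit-passage step the paper leaves implicit.
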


\section{Numerical Experiments}\label{num:ex} 

In this section, we present a simple numerical experiment that confirms our theory. 
We shall consider to solve the transport equation \eqref{eqn:full_incomp_a} with $\bu$ governed by the Darcy's law \eqref{eqn:main_velocity}. 
Here we assume $\mu(c) = 1$. Unless otherwise specified, we use the IIPG methodin in this section, i.e., $\theta_f=0$.

\subsection{1D case of propagating a concentration front through the domain}
We first consider the one-dimensional transport equation without the diffusion term:
\begin{equation} 
\partial_t c + (uc)_x = fc^*, \quad 0 < x < 1, \mbox{ with } t > 0, 
\end{equation} 
where $f = u_x = -\pi/2 \sin(\pi x/2)$ is the sink, that is $c^* = c$. Note that there is no source in this case. For the flow equation, we choose $K=1$ with Dirichlet boundary condition given as $p(0)=0$ and $p(1)=-2/\pi$. Thus, the flow direction is from left to right, we let  the inflow concentration $c_{\rm I}=1$ and the initial condition $c^0=0.1$. 

On the one hand, we study the behavior of the solution and the maximum principle. We discretize the interval $[0,1]$ with $10, 20, 50, 100$ and $200$ grid blocks, respectively. For the time step, we use $\Delta t=0.02$ at $T=0.5$. Use the piecewise linear function for the flow equation with penalty parameter $\sigma=100/h$.
From the first two figures in Fig. \ref{fig: 1D}, we can find that this method give virtually identical transport solutions, namely a traveling front two constant regions where $c=1$ and $c=0.1$. However, when using a piecewise linear function for the transport, we not only observe oscillations in the middle, but also encounter nonphysical values in the cases with $10$ and $20$ points. 
On the other hand, we test the $L^2$-stability. For the last plot in Fig. \ref{fig: 1D}, we use the piecewise quadratic function for the transport equation and $DG_1, DG_2, DG_3$ and $DG_4$ for the flow equation to compute $U$ with 10 points. It is evident that the $DG_1$ flux does not satisfy the $L^2$-stability criterion, as it fails to meet the local conservation. This observation is consistent with our theoretical predictions.
\begin{figure}
    \centering
\includegraphics[width=0.3\textwidth]{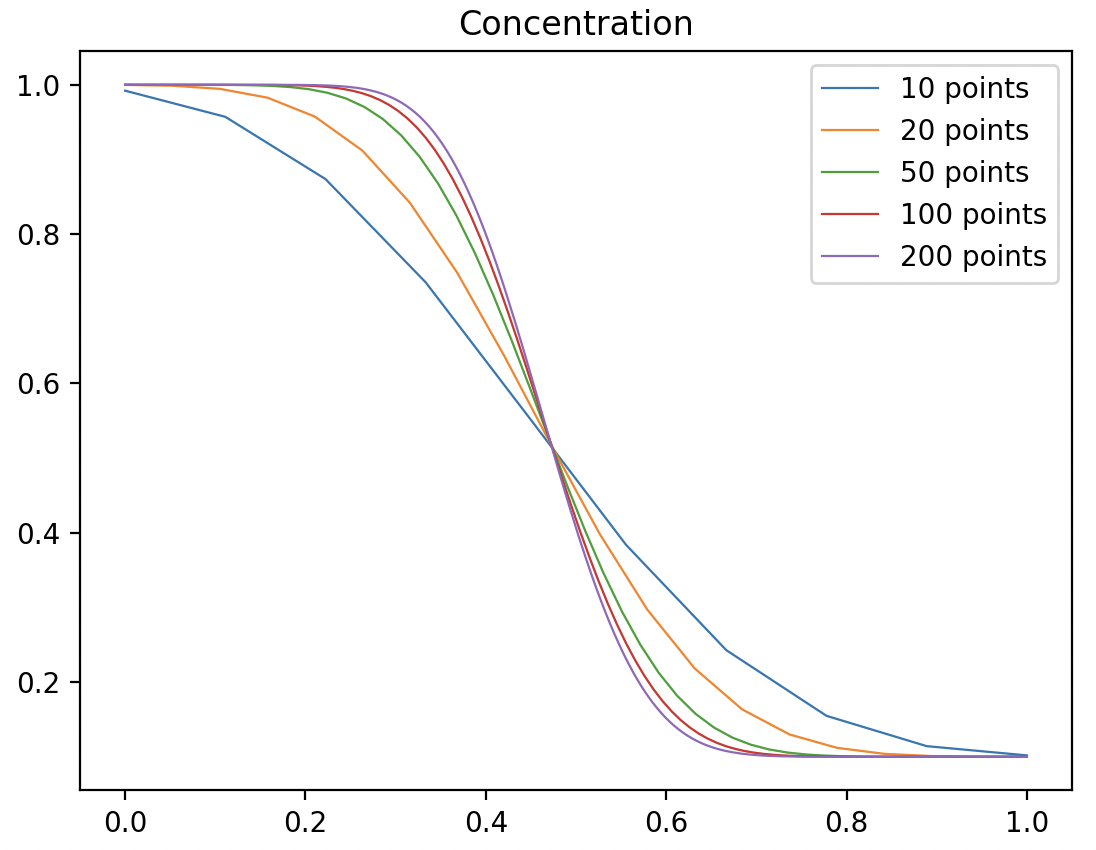}
\includegraphics[width=0.3\textwidth]{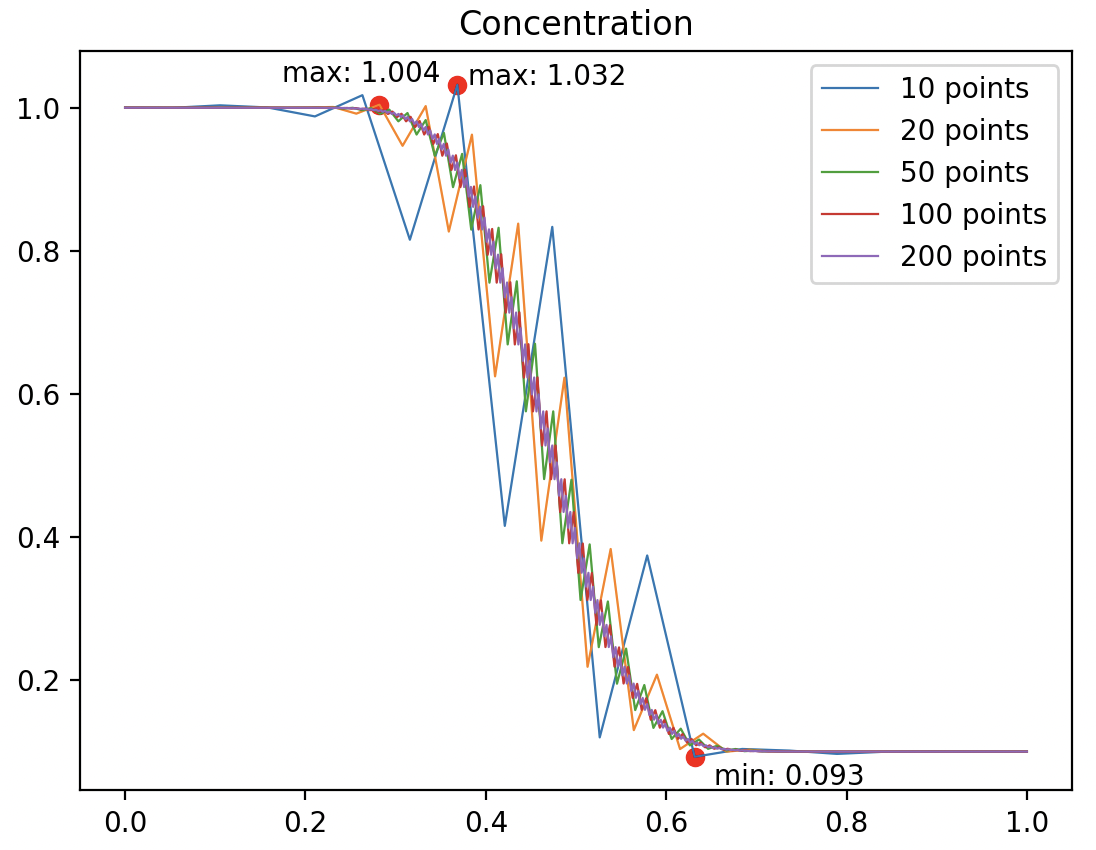}
\includegraphics[width=0.3\textwidth]{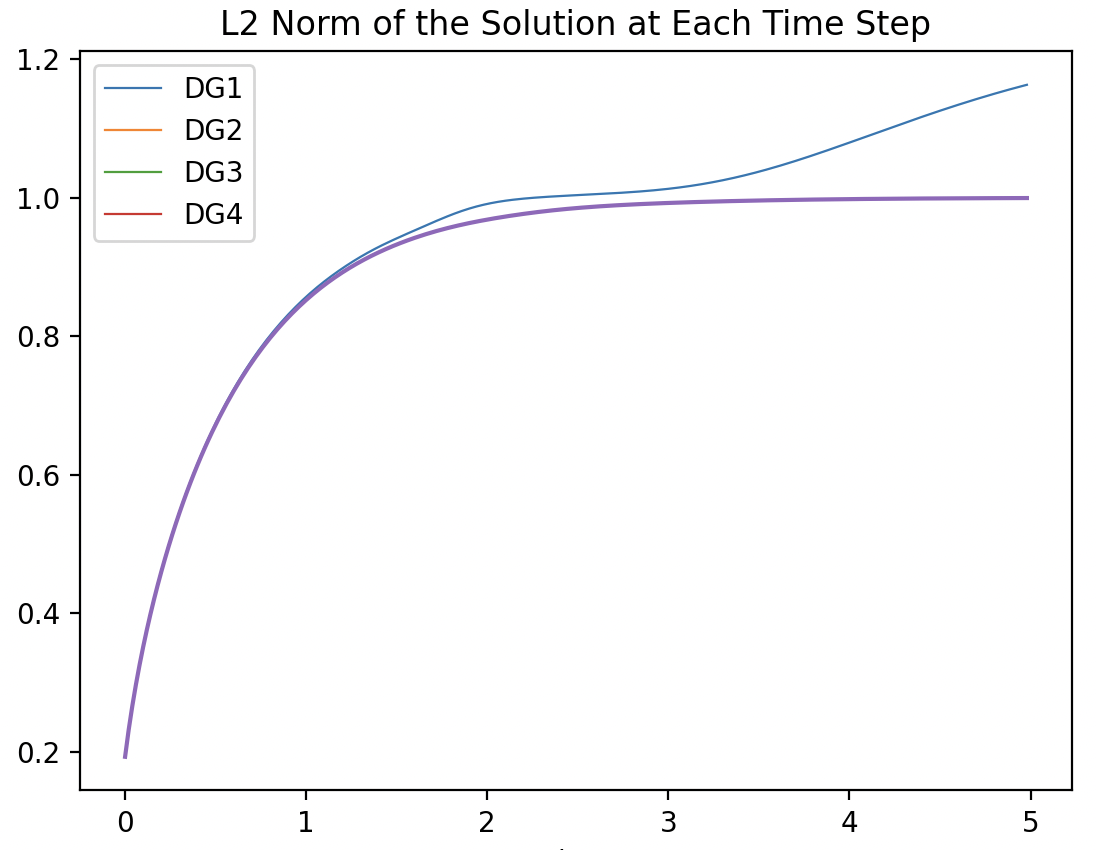}
    \caption{Concentration values at $T=0.5$ with piecewise constant function(left) and piecewise linear function(middle); $L^2$ norm for each time step using $DG_2$ for the transport equation and $DG_1$, $DG_2$, $DG_3$ and $DG_4$ for the flow equation.(right). }
    \label{fig: 1D}
\end{figure}


\subsection{2D case with no external force and no diffusion}

A simple example is the permeability block. Let $\Omega = (0,1)^2$, and the permeability tensor $\kappa$ is defined as a diagonal tensor defined as follows: 
\begin{equation}
\begin{aligned}
\bm{\kappa} = \left \{ \begin{array}{l} \left ( \begin{array}{cc} 10^{-3} & 0 \\ 0 & 10^{-3} \end{array} \right ) \mbox{ in } \Omega_s = \left (\frac{3}{8}, \frac{5}{8}  \right ) \times \left(\frac{1}{4}, \frac{3}{4} \right) \\ 
 \left ( \begin{array}{cc} 1 & 0 \\ 0 & 1 \end{array} \right )  \mbox{ elsewhere. } \end{array} \right.
 \end{aligned}
\end{equation} 
The inflow boundary condition for $c$ is given as 
$c_{\rm I} = 1 \mbox{ on } \Gamma_{D1} \times (0, T],$  
subject to the initial condition $c(0,x) = 0$ in $\Omega$. The pressure boundary condition is given as  $p = 1 \mbox{ on }  \Gamma_{D1}$, $p = 0 \mbox{ on } \Gamma_{D2}$ and $\bu \cdot \bn = 0 \mbox{ on } \Gamma_N$
For simplicity, we shall set $f = D(\bu) = 0$. 
The block and the solution to the flow equation solved by $DG_1$ can be seen in Fig. \ref{fig:sample}. After applying a piecewise constant function to solve the transport equation, we obtain the results depicted in Figure \ref{fig: block}, showcasing $T=0.5, 1, 1.5,$ and $3$ from left to right. These results adhere to the maximum principle and maintain positivity.
\begin{figure}[h!]
\centering
\includegraphics[scale=.16]{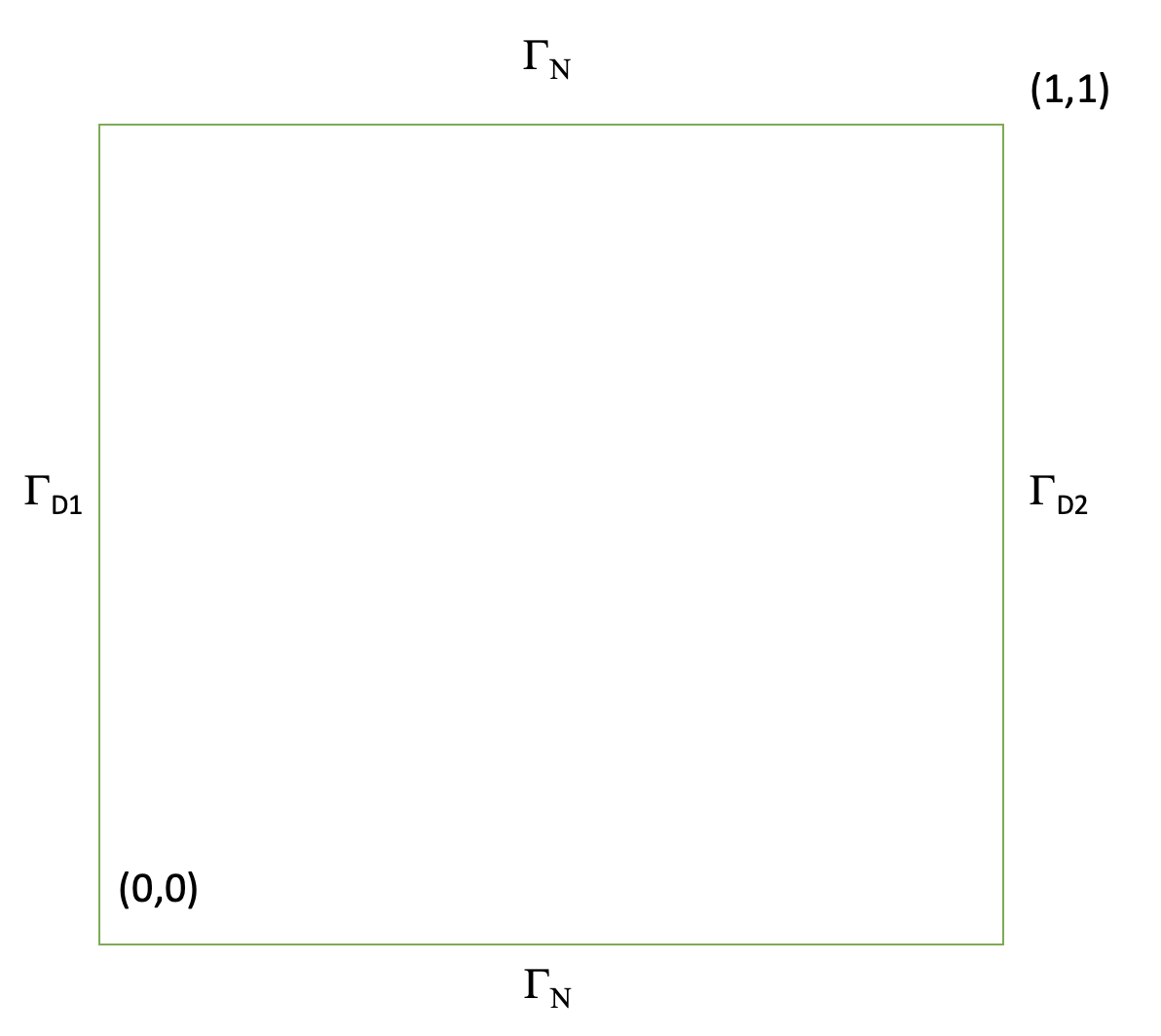} 
\hspace{0.1cm} 
\includegraphics[scale=.12]{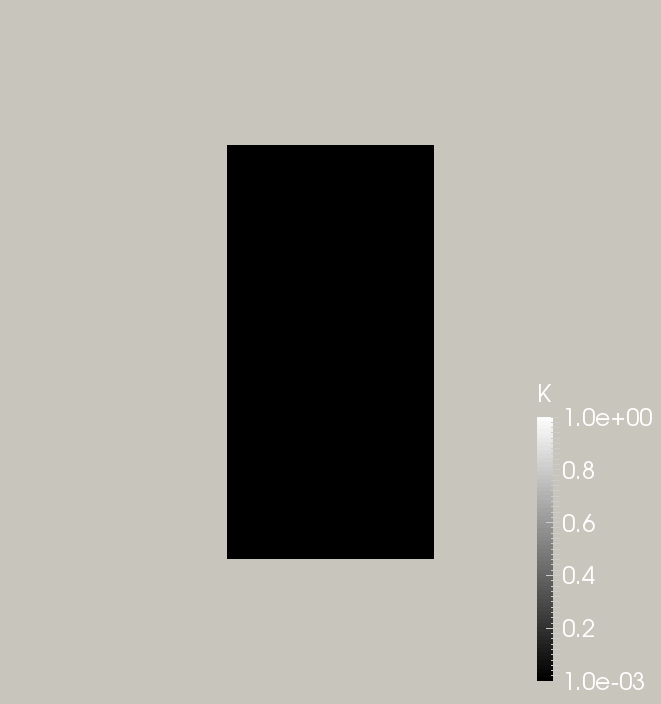}
\hspace{0.1cm} 
\includegraphics[scale=.12]{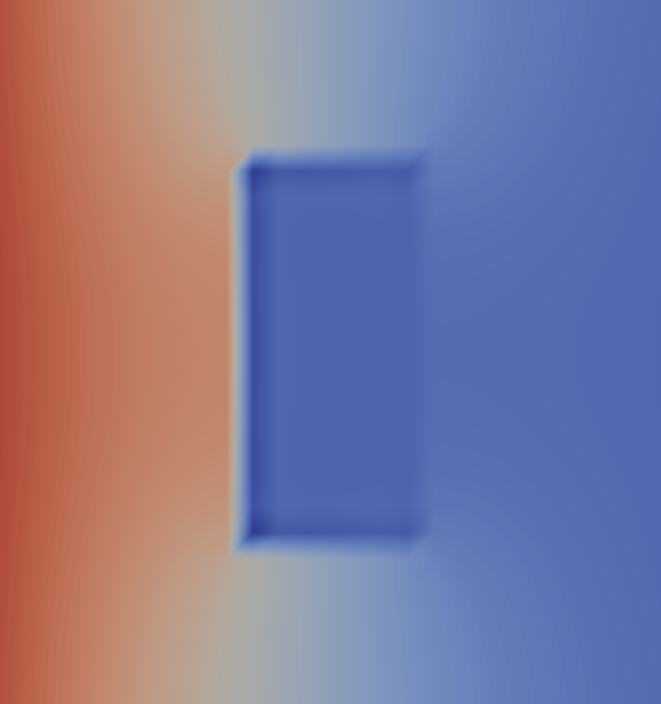}
\caption{Left: Domain; Middle: K-block; Right: Pressure.}
\label{fig:sample}
\end{figure}
\begin{figure}
    \centering
\subfigure[$T=0.5$]{\includegraphics[width=0.22\textwidth]{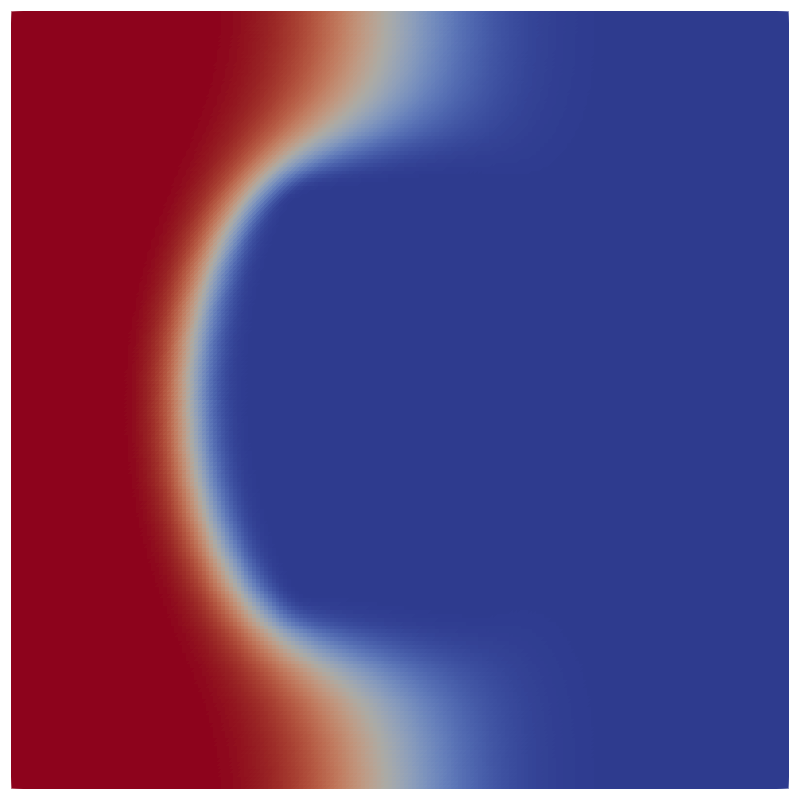}}
\subfigure[$T=1.0$]{\includegraphics[width=0.22\textwidth]{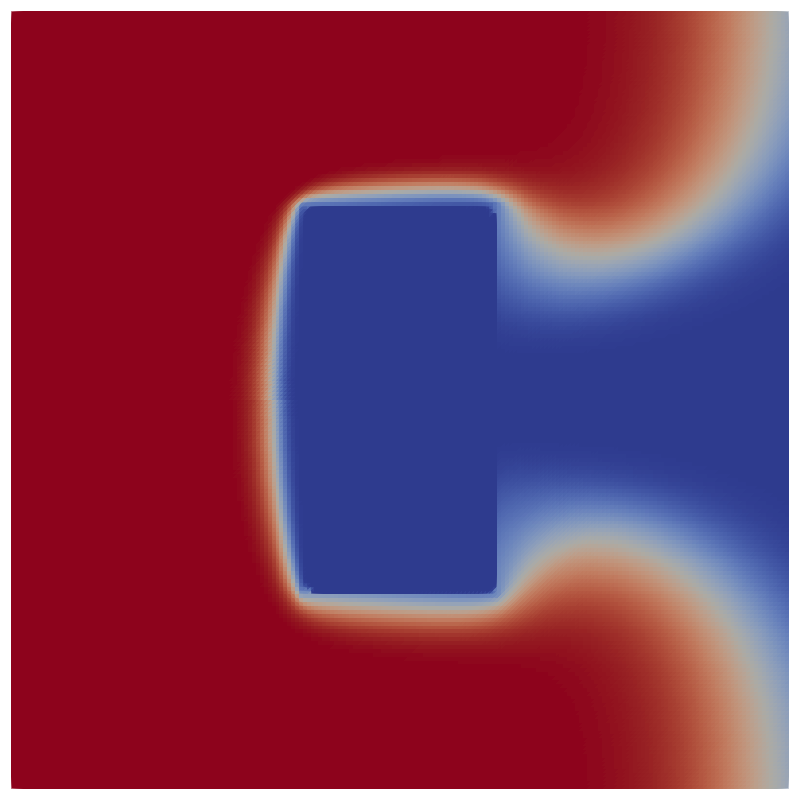}}
\subfigure[$T=1.5$]{\includegraphics[width=0.22\textwidth]{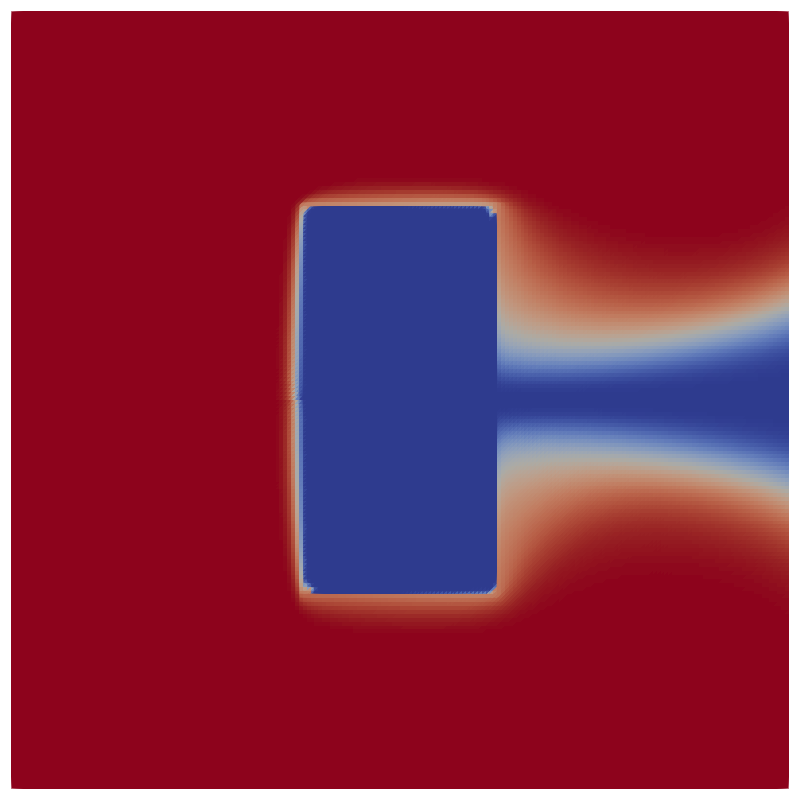}}
\subfigure[$T=3.0$]{\includegraphics[width=0.22\textwidth]{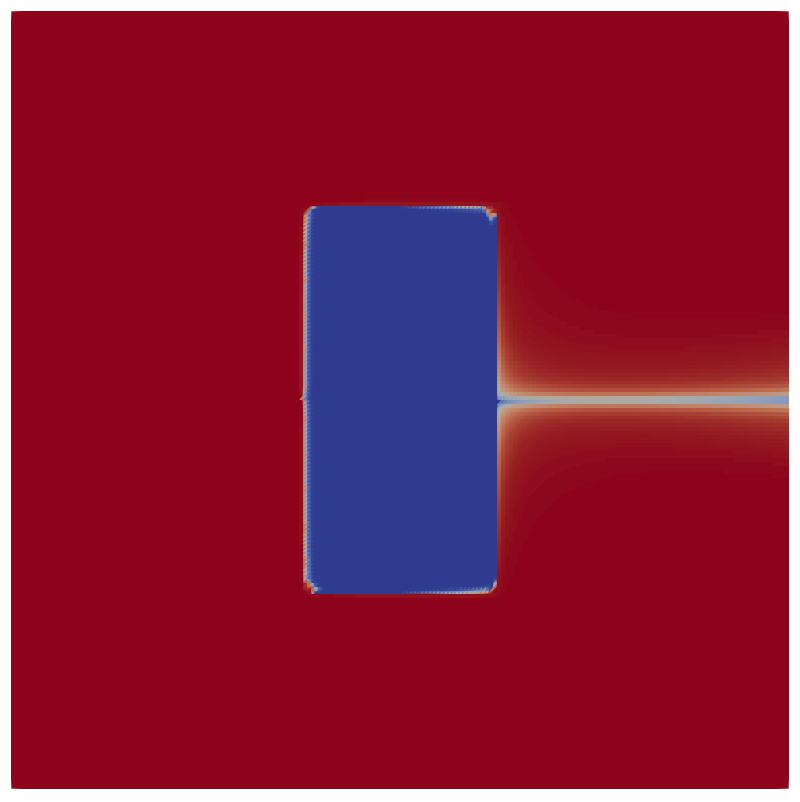}}
\includegraphics[width=0.07\textwidth]{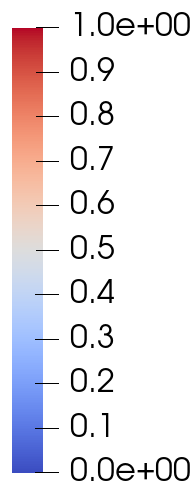}
    \caption{Concentration values at $T=0.5, 1.0, 1.5, 3.0$ (from left to right).}
    \label{fig: block}
\end{figure}

\subsection{2D case with external force for injection and extraction} 

We now consider the nonzero external force case. Let $\Omega = (0,1)^2$. The diagonal entry for the permeability $\bm{\kappa}$ is defined by: 
\begin{equation}
\bm{\kappa} = \left \{ \begin{array}{l} 
\left ( \begin{array}{cc} 10^{-3} & 0 \\ 0 & 10^{-3} \end{array} \right ) \mbox{ for } x \geq 0.5 \\ 
\left ( \begin{array}{cc} 1 & 0 \\ 0 & 1 \end{array} \right )  \mbox{ elsewhere. } \end{array} \right. 
\end{equation} 
The fluid is injected at the corner $(0,0)$ with the injection rate $f^+=100$, and it is extracted at the corner $(1,1)$ with the extraction rate $f^-=-100$. Using the piecewise linear function for the flow equation and the piecewise constant function for the transport equation, we set $\Delta t=h=0.01$. In Figure \ref{fig: pointsource}, we can see that the fluid smears into the low permeability zone and towards the extraction point, results in a physical values. If we use the pieciwise linear function for transport equation, solutions greater than 1 will occur, once again highlighting the importance of local conservation.
\begin{figure}
    \centering
\includegraphics[width=0.22\textwidth]{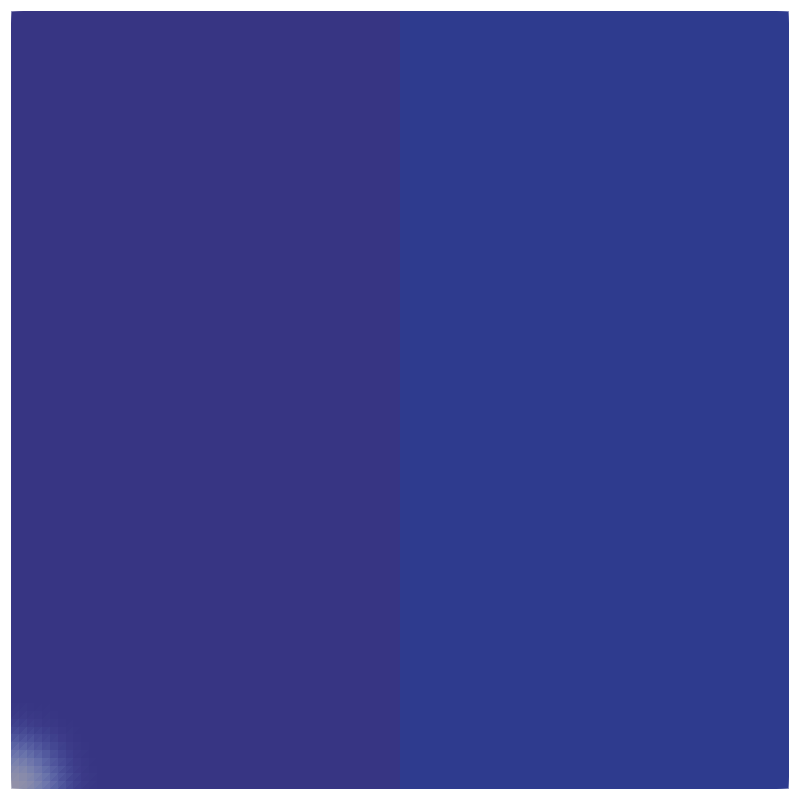}
\includegraphics[width=0.22\textwidth]{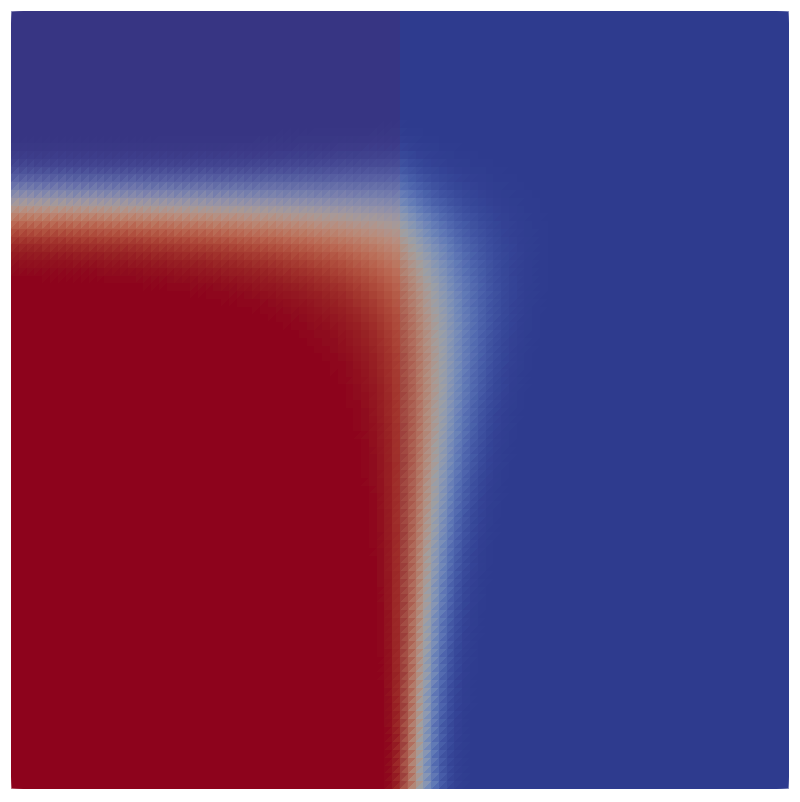}
\includegraphics[width=0.22\textwidth]{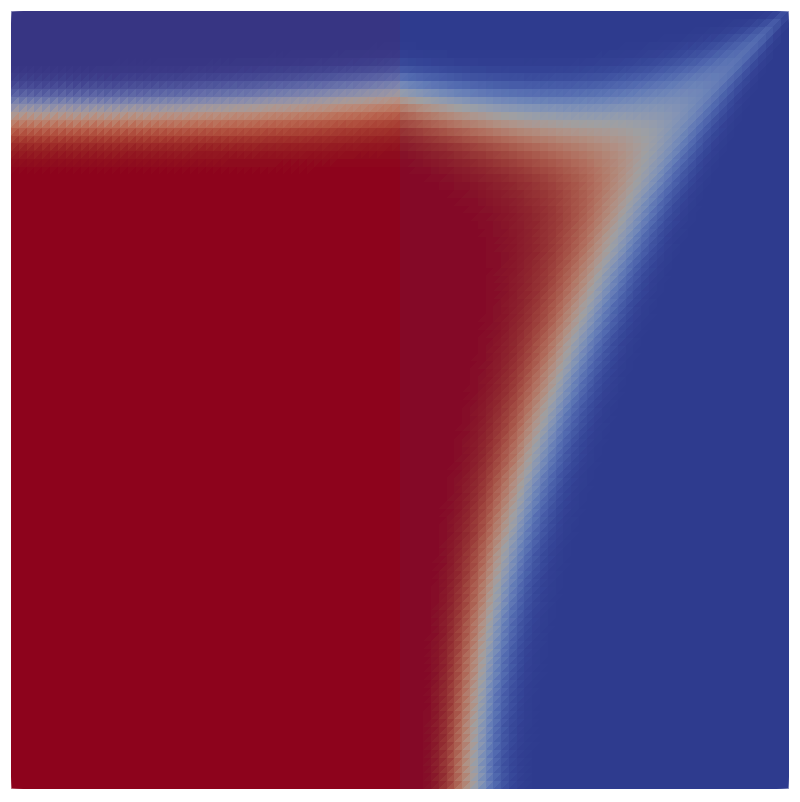}
\includegraphics[width=0.22\textwidth]{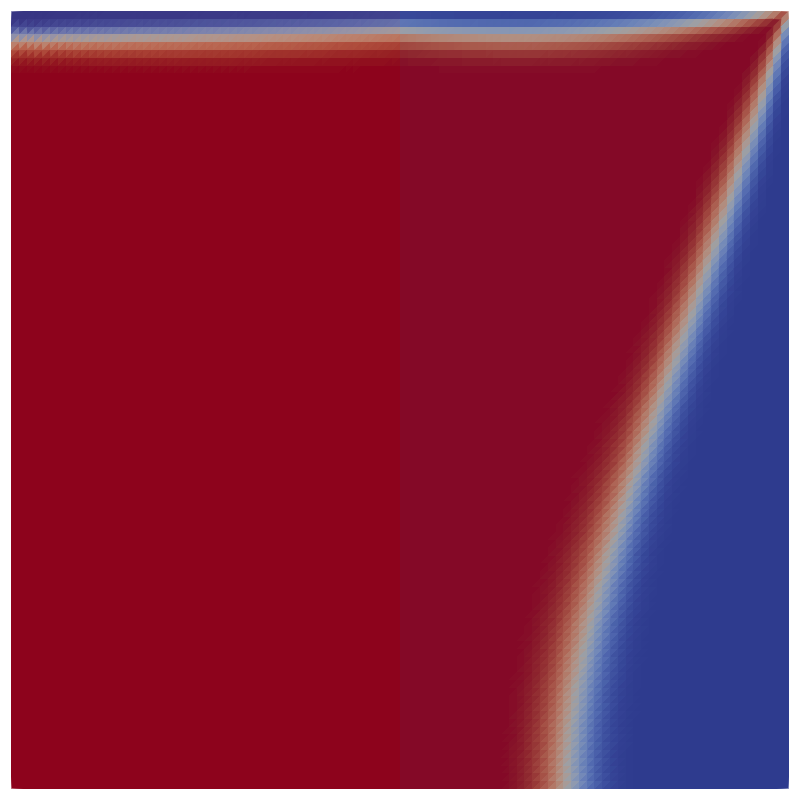}
\includegraphics[width=0.07\textwidth]{fig/range.png}\\
\hspace{-0.5pt}
\subfigure[$T=0$]{\includegraphics[width=0.22\textwidth]{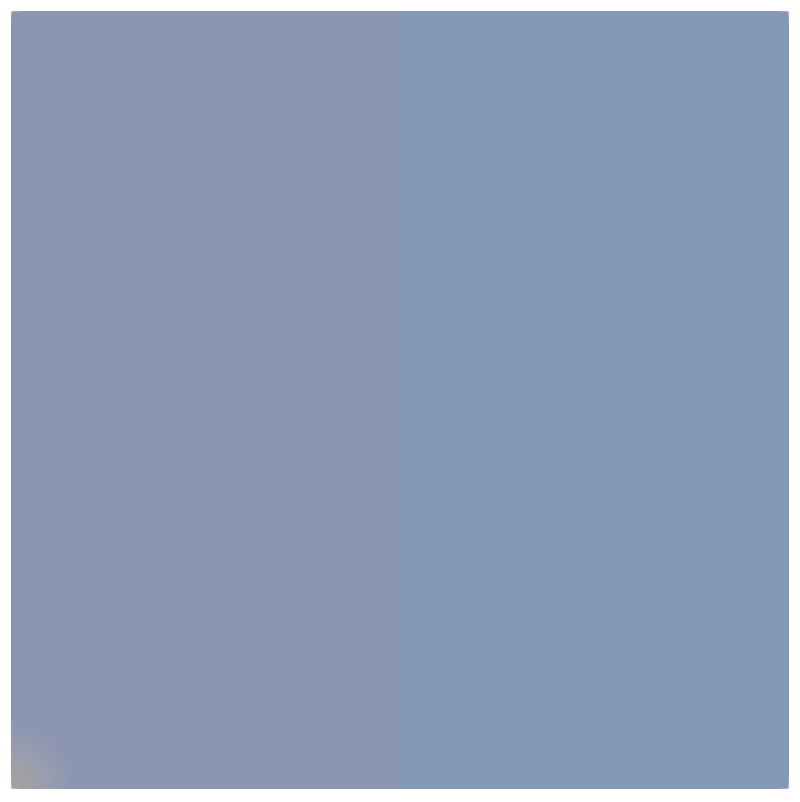}}
\subfigure[$T=4$]{\includegraphics[width=0.22\textwidth]{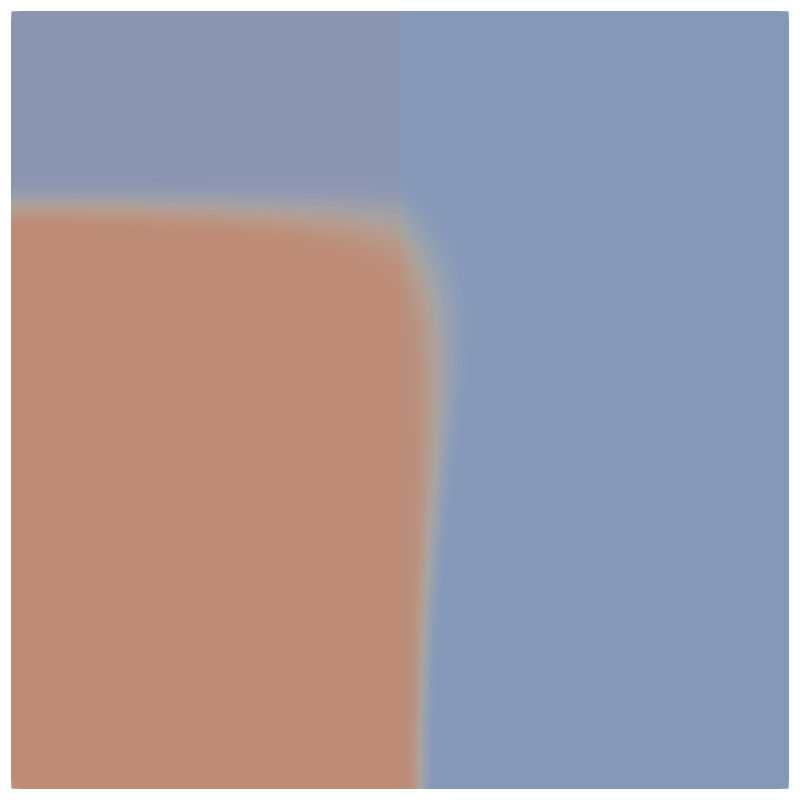}}
\subfigure[$T=6$]{\includegraphics[width=0.22\textwidth]{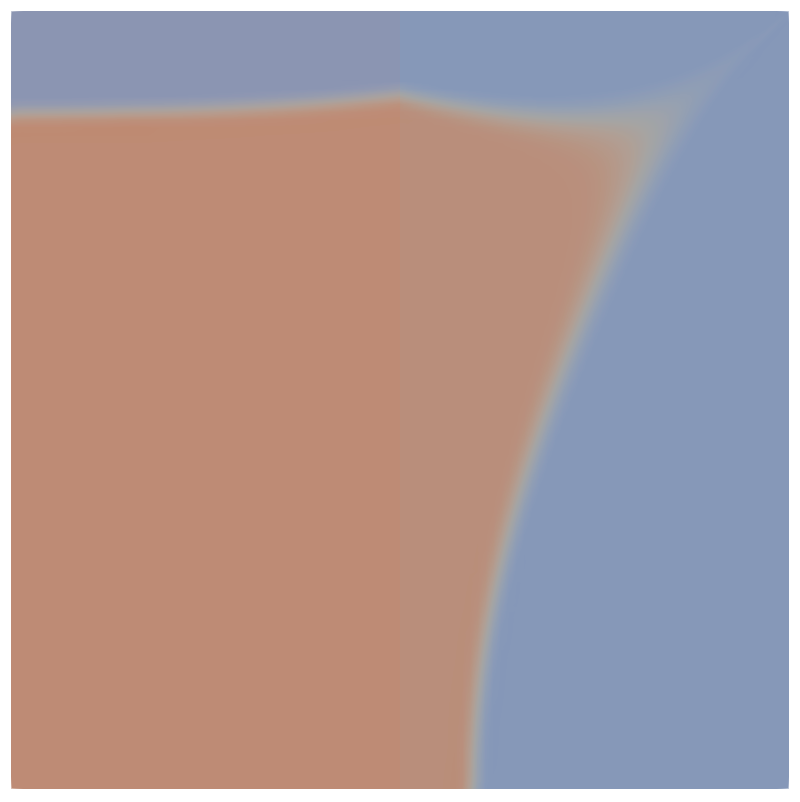}}
\subfigure[$T=10$]{\includegraphics[width=0.22\textwidth]{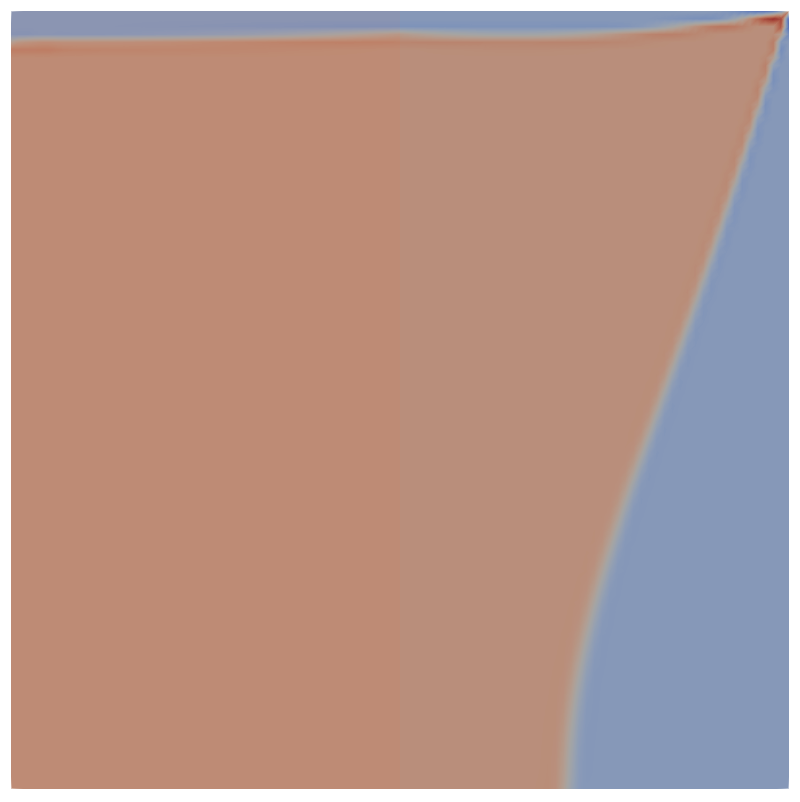}}
\includegraphics[width=0.07\textwidth]{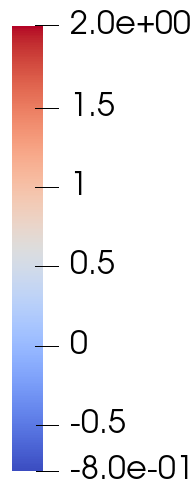}
    \caption{Concentration values at $T=0, 4, 6, 10$ with injection and extraction. The first row used the piecewise constant function and the second row used the piecewise linear function.}
    \label{fig: pointsource}
\end{figure}

\subsection{2D case to study the error accumulation for concentration}
In this section, we investigate the impact of local conservation flux on reproducing the constant solution.
We consider the transport and flow equation on the domain $\Omega=(0,1)^2$. Choose $D$ and $K$ are constant diagonal tensors with $D_{ii}=0$ and $K_{ii}=10.0$. Let  initial condition $c^0=1$ and inflow concentration $c_{\rm I}=1$. The boundary conditions are chosen as follows: $g=100$ on the right boundary, $g=0$ on the bottom boundary, and homogeneous Neumann boundary conditions for the others. The simulation is run until time $T=1$ with $\Delta t=0.01$ and $h=1/64$. For the penalty parameter in the flow equation, we set $\sigma=100/h$. Given that the exact solution in this case is $c=1$, Table \ref{2Derror} presents the error results obtained using different types of DG methods for the flow equation, namely: IIPG, NIPG and SIPG.
\begin{table}[h]\label{2Derror}
\centering
\caption{The $L^2$-error obtained when using piecewise constant and piecewise linear functions to solve the flow equation and various types of DG methods for the transport equation with the exact solution $c=1$.}
\begin{tabular}{@{}cccc}
\hline
& $\theta_f=0$& $\theta_f=1$ &$\theta_f=-1$\\ 
\hline
$DG_0$& 2.3696e-11&  8.1478e-11&2.1508e-11\\
 $DG_1$& 7.2169e-12& 5.1715e-3&8.9678e-3\\
 \hline
\end{tabular}
\end{table}

\section{Conclusions}\label{con} 
In this paper, we introduce the new concept of the locally conservative flux, i.e., the local conservation of degree $k \geq 0$. This can be interpreted as the intermediate local conservation between the standard local conservation and the strong conservation. Under this condition, 
we prove the $L^2$-stability for the transport equation.
Besides, we show how the positivity and maximum principle can be obtained when the piecewise constant function is used for approximating the transport. This resolves the long standing open question. Finally, several numerical experiments are carried out to verify the theoretical results.

\section*{Acknowledgements}
SG is supported in part by National Natural Science Foundation of China (Grant number 12201535), Guangdong Basic and Applied Basic Research Foundation (Grant number 2023A1515011651), and Shenzhen Stability Science Program 2022. YL is supported in part by NSF-DMS 228499. YL is supported in part by NSF-DMS 2110728. YY is supported in part by Hetao Shenzhen-Hong Kong Science and Technology Innovation Cooperation Zone Project (No.HZQSWS-KCCYB-2024016).

\bibliographystyle{siamplain}
\bibliography{xyz,ref}



\end{document}


\maketitle

\section{A detailed example}

Here we include some equations and theorem-like environments to show
how these are labeled in a supplement and can be referenced from the
main text.
Consider the following equation:
\begin{equation}
  \label{eq:suppa}
  a^2 + b^2 = c^2.
\end{equation}
You can also reference equations such as \cref{eq:matrices,eq:bb} 
from the main article in this supplement.

\lipsum[100-101]

\begin{theorem}
  An example theorem.
\end{theorem}

\lipsum[102]
 
\begin{lemma}
  An example lemma.
\end{lemma}

\lipsum[103-105]

Here is an example citation: \cite{KoMa14}.

\section[Proof of Thm]{Proof of \cref{thm:bigthm}}
\label{sec:proof}
\lipsum[106-112]

\section{Additional experimental results}
\Cref{tab:foo} shows additional
supporting evidence. 

\begin{table}[htbp]
{\footnotesize
  \caption{Example table}  \label{tab:foo}
\begin{center}
  \begin{tabular}{|c|c|c|} \hline
   Species & \bf Mean & \bf Std.~Dev. \\ \hline
    1 & 3.4 & 1.2 \\
    2 & 5.4 & 0.6 \\ \hline
  \end{tabular}
\end{center}
}
\end{table}

\bibliographystyle{siamplain}
\bibliography{references}